\theoremstyle{definition}
\newtheorem{defi}{Definition}[section]
\newtheorem{defin}[defi]{Definition}
\newtheorem{rem}[defi]{Remark}
\newtheorem{nota}[defi]{Notation}
\theoremstyle{plain}
\newtheorem{teo}[defi]{Theorem}
\newtheorem{cor}[defi]{Corollary}
\newtheorem{lem}[defi]{Lemma}
\newtheorem{prop}[defi]{Proposition}
\author{Marco Antei}
\title{Extention of Finite Solvable Torsors over a Curve}
\begin{document}
\maketitle

\noindent \textbf{Abstract.} Let $R$ be a discrete valuation ring with fraction field $K$ and with algebraically closed residue field of positive characteristic $p$. Let $X$ be a smooth fibered surface over $R$ with geometrically connected fibers endowed with a section $x\in X(R)$. Let $G$ be a finite solvable $K$-group scheme and assume that either $|G|=p^n$ or $G$  has a normal series of length $2$. We prove that every quotient pointed $G$-torsor over the generic fiber $X_{\eta}$ of $X$  can be extended to a torsor over $X$ after eventually extending scalars and after eventually blowing up $X$ at a closed subscheme of its special fiber $X_s$.
\medskip
\\\indent \textbf{Mathematics Subject Classification}: 14H30, 14L15.\\\indent
\textbf{Key words}: solvable torsors, fibered surfaces, group schemes.

\tableofcontents
\bigskip

\section{Introduction}\label{sez:Intro} 
Let $S$ be a connected Dedekind scheme and $\eta=Spec(K)$ its generic point; let $X$ be a scheme, $f:X\to S$ a faithfully flat morphism of finite type and $f_{\eta}:X_{\eta}\to \eta$ its generic fiber. Assume we are given a finite $K$-group scheme $G$ and a $G$-torsor $Y\to X_{\eta}$. The problem of extending a torsor $Y\to X_{\eta}$ consists of searching a finite and flat  $S$-group scheme $G'$ whose generic fiber is isomorphic to $G$ and  a $G'$-torsor $T\to X$ whose generic fiber is isomorphic to $Y\to X_{\eta}$ as a $G$-torsor. Some solutions to this problems are known in some particular relevant cases, that we briefly recall hereafter. The first important answer to this problem is due to Grothendieck: he proves that, after eventually extending scalars, the problem has a solution when $S$ is the spectrum of a complete discrete valuation ring with algebraically closed residue field of positive characteristic $p$, with $X$ proper and smooth over $S$ with geometrically connected fibers  and $(|G|,p)=1$ (\cite{SGA1}, Expos\'e X, or \cite{Szamuely}, Theorem 5.7.10).  When $S$ is the spectrum of a discrete valuation ring of residue characteristic $p$, $X$ is a proper and smooth curve over $S$ then Raynaud suggests a solution, after eventually extending scalars, for $G$ commutative of order a power of $p$  (\cite{Ray3} \S 3). A similar problem has been studied by Sa\"idi in \cite{Saidi}, \S 2.4 for formal curves of finite type and $G=(\mathbb{Z}/p\mathbb{Z})_K$.  When $S$ is the spectrum of a  d.v.r. $R$ of mixed characteristic $(0,p)$ Tossici provides a solution, after eventually extending scalars, for $G$ commutative when $X$ is a regular scheme, faithfully flat over $S$, with integral fibers provided that the normalization of $X$ in $Y$ has reduced special fiber (\cite{Tos}, Corollary 4.2.8). Finally in \cite{Antei2}, \S 3.3 we provide a solution for $G$ commutative, when  $S$ is a connected Dedekind scheme and $f:X\to S$ is a relative smooth curve with geometrically integral fibers endowed with a section $x:S\to X$ provided that $Y$ is pointed over $x_{\eta}$ (or, in higher dimension, a smooth morphism satisfying additional assumptions, cf.  \cite{Antei2}, \S 3.2). We stress that in this last case we do not need to extend scalars.\\

In this paper we study the problem of extending the $G$-torsor $Y\to X_{\eta}$ when $G$ is finite and solvable. More precisely the aim of this paper is to prove the following:
\begin{teo}(Theorem \ref{teoPRINCIPALE} and Corollary \ref{corLunghezzaDue})
Let $R$ be a discrete valuation ring with fraction field $K$ and with algebraically closed residue field of positive characteristic $p$. Let $X$ be a smooth fibered surface over $R$ with geometrically connected fibers endowed with a section $x\in X(R)$. Let $G$ be a finite and solvable $K$-group scheme. We prove that every quotient pointed $G$-torsor over the generic fiber $X_{\eta}$ of $X$  can be extended to a torsor over $X$ after eventually extending scalars and after eventually blowing up $X$ at a closed subscheme of its special fiber $X_s$ in the following two cases:
\begin{enumerate}
	\item $|G|=p^n$;
	\item $G$  has a normal series of length $2$.
\end{enumerate}
\end{teo}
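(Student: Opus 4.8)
\emph{Strategy.} The plan is to peel off abelian layers one at a time by induction, the base case being the abelian extension theorem of \cite{Antei2}. In both cases the structure of $G$ yields a short exact sequence of finite $K$--group schemes $1\to H\to G\to Q\to 1$ with $H$ abelian and normal: in case (2) take $H=[G,G]$, so $Q=G/H$ is abelian too; in case (1) take $H$ to be the last nontrivial term of the derived series of $G$, which is abelian and characteristic (hence normal) and nontrivial, so $Q=G/H$ is solvable of order $p^{m}$ with $m<n$. If $Y\to X_\eta$ is a quotient pointed $G$--torsor, then $Z:=Y/H\to X_\eta$ is a quotient pointed $Q$--torsor, $Y\to Z$ is a quotient pointed $H$--torsor equivariant for the induced $Q$--action, and $Y\to X_\eta$ is the composite $Y\to Z\to X_\eta$. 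When $G$ is abelian, \cite{Antei2}, \S3.3 applies verbatim --- a smooth fibered surface over $R$ with geometrically connected fibers being a relative smooth curve with geometrically integral fibers --- and gives the extension with no scalar extension and no blow--up; combined with the reduction above this settles case (2) in two steps, and for case (1) we induct on $n$.

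\emph{Admissible bases.} For the induction to close, the base must be allowed to range over the regular fibered surfaces $W\to\operatorname{Spec}R$ with generic fiber $X_\eta$ carrying a section that meets a multiplicity--one component of $W_s$ at a smooth point; this class contains the smooth fibered surfaces of the statement and is stable under finite separable scalar extension and under blow--up along a closed subscheme of the special fiber chosen off the section. One first upgrades the abelian case to this wider class by the methods of \cite{Antei2} --- necessary because the intermediate object below will not be $R$--smooth.

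\emph{The inductive step.} Apply the inductive hypothesis (resp. the abelian case) to $Z\to X_\eta$: after a scalar extension $R\subset R'$ and a blow--up of $X_{R'}$ in its special fiber we obtain an admissible base $X_1$ with generic fiber $X_{\eta,K'}$, a finite flat $R'$--group scheme $Q'$ with $Q'_{K'}=Q_{K'}$, and a $Q'$--torsor $\mathcal Z\to X_1$ extending $Z_{K'}$, with its $Q'$--action. Being flat over $R'$, $\mathcal Z$ is the schematic closure of its integral generic fiber, hence integral; after finitely many further blow--ups (and perhaps a further scalar extension), taken to be pullbacks of blow--ups of $X_1$ so as to descend downstairs, we may assume $\mathcal Z$ is itself admissible. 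Now apply the (admissible--base) abelian extension theorem to the $H$--torsor $Y\to(\mathcal Z)_\eta$ over $\mathcal Z$, getting an $H'$--torsor $\mathcal Y\to\mathcal Z'$ extending it. Finally, propagate the $Q'$--action from $\mathcal Z'$ to $\mathcal Y$ (using rigidity of such extensions over a regular model, absorbing one last blow--up if needed): this builds a finite flat group scheme $G'$ in an extension $1\to H'\to G'\to Q'\to 1$ restricting to the sequence for $G$, and the composite $\mathcal Y\to\mathcal Z'\to X_2$ is a $G'$--torsor extending $Y$, where $X_2$ is obtained from $X$ by scalar extension and blow--up in the special fiber. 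This proves the theorem.

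\emph{The main obstacle.} The crux is the middle of the inductive step: converting the $Q'$--torsor $\mathcal Z$ into an admissible base (without ever modifying the generic fiber $X_\eta$), and then carrying the group action across. When $Q'$, and later $H'$, is \'etale over $R'$ --- essentially the case where residue characteristic is irrelevant --- the map $\mathcal Z\to X_1$ is \'etale, $\mathcal Z$ is automatically regular with the section on a multiplicity--one component, and the descent of the action is formal; the difficulty evaporates. The real work is the opposite case, which is the generic one when $|G|=p^{n}$: over $R$ the finite flat models of a $K$--group scheme of $p$--power order are almost never \'etale (and in equal characteristic $p$ even $G$ is infinitesimal or of multiplicative type), so $\mathcal Z\to X_1$ ramifies, $\mathcal Z$ becomes singular along --- and may fail to be normal in a neighbourhood of --- the ramification divisor, and one must produce a blow--up, forced to descend from the special fiber of $X_1$, that simultaneously regularizes $\mathcal Z$ and keeps a multiplicity--one component through the section. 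Here the curve hypothesis is indispensable --- the ramification locus is a divisor whose vertical part sits in the special fiber (and is separated off by blow--ups) while its horizontal part is controlled by the section --- and here one invokes the theory of models of finite flat group schemes (N\'eron models, effective models) both to construct $G'$ and to transport the $Q'$--action. Confining every modification to the special fibers is the delicate bookkeeping that makes the argument run.
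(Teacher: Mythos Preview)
Your outline has the right shape --- layer off abelian pieces, extend each, reassemble --- but the three load-bearing steps are asserted rather than proved, and each needs a specific mechanism that you do not supply.

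First, the ``upgrade'' of the abelian extension theorem to your class of admissible (regular but not smooth) bases does not follow from the methods of \cite{Antei2} alone: those arguments use smoothness. The paper's route is to pull the commutative torsor back from the N\'eron model of the Jacobian via an Abel--Jacobi map $X\to\mathcal{N}_J$ constructed even when $X$ is only regular (Proposition~\ref{PropAbelJacobi}, Theorem~\ref{teoExtAbel}); this requires $J$ to have \emph{abelian} reduction, which is secured after scalar extension by Raynaud's theorem (Theorem~\ref{teoReno}). Without this, your inductive hypothesis never applies to the intermediate object $\mathcal{Z}$.

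Second, and more seriously, your claim that the desingularization of $\mathcal{Z}$ can be ``taken to be pullbacks of blow-ups of $X_1$'' is exactly the crux, and you acknowledge as much in your final paragraph without resolving it. The paper's device is to refine all the way down to order-$p$ layers and then (Lemma~\ref{lemmaPRINCIPALE}) replace each $R$-model by one whose \emph{special fibre is infinitesimal}; then the fibre of $\mathcal{Z}\to X_1$ over any closed point is a single reduced point, so blowing up $\mathcal{Z}$ at a singular point literally equals the pullback of the blow-up of $X_1$ at its image (Lemma~\ref{lemmaScoppio}, Proposition~\ref{propScoppio}), and normalization descends by a separate N\'eron-blow-up argument (Lemmas~\ref{lemmaStandard}--\ref{lemmaFiniteModelMap}). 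Your derived-series decomposition into an arbitrary abelian $H$ does not give access to this trick.

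Third, the reassembly is not a matter of ``propagating the $Q'$-action by rigidity''. The paper invokes Garuti's theorem (Theorem~\ref{teoEstTorri}): given the tower $Z_2\to Z_1\to X$ one passes to a common dominating torsor $T$ under a larger group $H$, identifies a normal subgroup $N\trianglelefteq H$ by schematic closure, and takes the contracted product $T\times^H(H/N)$ to produce a genuine $G'$-torsor extending $Y$. Your sketch does not provide an alternative construction of $G'$, and there is no evident rigidity statement that would do the job.
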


\indent \textbf{Acknowledgements:} I would like to thank Mohammed Sa\"idi for inviting me at the Isaac Newton Institute where we had first interesting discussions on this topic. I would like to thank Vikram Mehta for inviting me at T.I.F.R. (Mumbai) where I have developed further this paper. I would like to thank Hausdorff Center (Bonn) for hospitality. Finally I would like to thank Michel Emsalem and Lorenzo Ramero for helpful remarks.

\section{Towers of torsors and solvable torsors}
\begin{nota} Throughout the whole paper every scheme will be supposed locally noetherian. 
Let $X$ be a $S$-scheme, $G$ a flat $S$-group scheme and $Y$ a $S$-scheme endowed with a right action of $G$.  A $S$-morphism $p:Y\to X$ is said to be a $G$-torsor  if it is affine, faithfully flat, $G$-invariant and locally trivial for the fpqc topology. We  say that a $G$-torsor is finite  if $G$ is finite and flat. Likewise we  say that a $G$-torsor is commutative (resp. solvable) if $G$ is commutative (resp. solvable). When we fix a section $x\in X(S)$ we say that a $G$-torsor $p: Y \to X$ is pointed if there exists a section $y\in Y_x(S)$. \end{nota}
\subsection{Solvable torsors}
Let $S$ be any connected scheme, recall that a finite and flat $S$-group scheme $G$ is said to be solvable if it has a normal series (or solvable series) 
\begin{equation}\label{eqSolvab} 0=H_n\triangleleft H_{n-1}\triangleleft .. \triangleleft H_{1}   \triangleleft H_0=G\end{equation} where each $H_i$ is a finite  and flat $S$-group scheme and each  quotient $H_i/H_{i+1}$ exists as an $S$-group scheme and is finite,  flat and commutative ($i=0, .., n-1$).  As usual $n$ is called the length of such a normal series.

\begin{rem}
Recall that if the $H_i$ are finite and flat then each $H_i/H_{i+1}$ exists and is a finite and flat $S$-group scheme (\cite{Sha}, \S 3, Theorem). 
\end{rem}

 Then observe that for $G$ as in (\ref{eqSolvab}) a $G$-torsor $Y\to X$ can be seen as a tower of commutative torsors, each of them being a $H_i/H_{i+1}$-torsor: they are called the commutative components of the solvable $G$-torsor. If for instance $n=2$  consider the contracted product $Y':=Y\times^G G/H_1$ (\cite{DemGab}, III, \S 4, n$^{\circ}$ 3) in order to factor $Y$ into a tower of two commutative torsors: a commutative $G/H_1$-torsor  $Y'\to X$ and a commutative $H_1$-torsor $Y\to Y'$:
$$\xymatrix{Y \ar[rr]^{H_1}\ar[dr]_{G} & & Y'\ar[dl]^{G/H_1}\\ & X & }$$ 
If $n>2$  we iterate the process factoring $Y\to Y'$ and so on. 

\subsection{Towers of  torsors}
\label{sez:ToT}
Now, let $S$ be a connected Dedekind scheme and $\eta=Spec(K)$ its generic point, let $X$ be a scheme, $f:X\to S$ a faithfully flat morphism of finite type and $f_{\eta}:X_{\eta}\to \eta$ its generic fiber. We assume the existence of a section $x\in X(S)$ and we denote by $x_{\eta}\in X_{\eta}(K)$ its generic fiber. We first consider the following general situation: we are given a finite $K$-group scheme $G$ (here $G$ is not necessarily solvable) and a $G$-torsor $Y\to X_{\eta}$ pointed in $y\in Y_{x_{\eta}}(K)$; we are looking for a  model of $G$, i.e. a finite and flat  $S$-group scheme $G'$ whose generic fiber is isomorphic to $G$ and a model of $Y\to X_{\eta}$, i.e. a $G'$-torsor $Z\to X$ whose generic fiber is isomorphic to $Y\to X_{\eta}$ as a $G$-torsor. Let $G_2$ be a non trivial $K$-finite  (but not necessarily commutative) closed, normal   subgroup scheme of $G$ and $G_1:=G/G_2$; we can see $Y\to X_{\eta}$ as a tower of two  torsors: a $G_2$-torsor $Y_2=Y\to Y_1$  and a  $G_1$-torsor  $Y_1\to X_{\eta}$ pointed in $y_1\in {Y_1}_x(K)$, image of $y$. 
We assume we are able to extend each  component, i.e. there exist finite and flat  $S$-group schemes $G_2'$ and $G_1'$ models (resp.) of $G_2$ and $G_1$, a $G_1'$-torsor $Z_1\to X$ extending $Y_1\to X_{\eta}$ and a  $G'_2$-torsor $Z_2\to Z_1$ extending $Y_2\to Y_1$  (pointed resp. in $z_1\in {Z_1}_x(S)$ and $z_2\in {Z_2}_{z_1}(S)$, sections extending $y_1$ and $y$). Then we are in the situation described by the following diagram:


\begin{equation}\label{diagTorsoriBrutti}
\xymatrix{Y=Y_2 \ar[d]_{G_2}\ar[r]& Z_2\ar[d]^{G_2'}\\ Y_1 \ar[d]_{G_1} \ar[r] & Z_1\ar[d]^{G_1'}\\ X_{\eta} \ar[d] \ar[r] & X \ar[d]\\Spec(K) \ar[r] & S}\end{equation}

 In general $Z_2\to X$ need not be a torsor, but from the tower $Z_2\to Z_1\to X$ we can obtain a torsor whose generic fiber is isomorphic to the $G$-torsor $Y\to X_{\eta}$, this is the object of the following:

\begin{teo}\label{teoEstTorri} The $G$-torsor $Y\to X_{\eta}$ can be extended to a finite pointed $G'$-torsor $Z\to X$ for some model $G'$ of $G$ if and only if the $G_1$-torsor $Y_1\to X_{\eta}$ and the $G_2$-torsor $Y\to Y_1$ can be extended. 
\end{teo}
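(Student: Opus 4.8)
The statement is an "if and only if", so the plan splits into two implications, of which only one carries content. The forward direction is essentially trivial: if $Z\to X$ is a finite pointed $G'$-torsor extending $Y\to X_\eta$, then choosing a model $G_2'$ of $G_2$ inside $G'$ (for instance the schematic closure of $G_2$ in $G'$, which is finite and flat over the Dedekind base since $G'$ is) and setting $G_1':=G'/G_2'$, the contracted product $Z_1:=Z\times^{G'}G'/G_2'$ is a $G_1'$-torsor over $X$ and $Z\to Z_1$ is a $G_2'$-torsor; passing to generic fibers recovers exactly the factorization $Y\to Y_1\to X_\eta$ of the solvable torsor described before the theorem, so both components are extended. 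The main work is the converse.

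For the converse I would start from diagram (\ref{diagTorsoriBrutti}): we are given $Z_1\to X$ a $G_1'$-torsor, $Z_2\to Z_1$ a $G_2'$-torsor, with compatible sections $z_1, z_2$, and we must manufacture from the tower $Z_2\to Z_1\to X$ a genuine $G'$-torsor $Z\to X$ over $X$ whose generic fiber is the given $G$-torsor $Y\to X_\eta$. The first step is to build the candidate model $G'$ of $G$: since $Y\to X_\eta$ is a $G$-torsor with $G$ an extension of $G_1$ by $G_2$, I expect to obtain $G'$ as an extension of $G_1'$ by $G_2'$ fitting in $0\to G_2'\to G'\to G_1'\to 0$, constructed so that its generic fiber is the extension defining $G$. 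Concretely one can take $G'$ to be the schematic closure, inside some auxiliary finite flat group scheme through which everything factors, of $G$ over $S$; the pointedness (existence of the section $y$, hence $y_1$) is what lets one rigidify these choices and control the group structure. The second step is to produce $Z$: the natural candidate is to push out or descend the tower, using the section $z_2$ to trivialize the ambiguity, so that $Z\to X$ is obtained from $Z_2\to Z_1$ by "inducing up" along $G_2'\hookrightarrow G'$ — i.e. $Z$ is the $X$-scheme representing the appropriate contracted product $Z_2\times^{G_2'}G'$ over $Z_1$, reinterpreted over $X$. One then checks: $Z\to X$ is affine and faithfully flat (both inherited from the tower), the $G'$-action is well defined and makes $Z\to X$ fpqc-locally trivial, and the generic fiber is $Y\to X_\eta$ by construction (here one uses that over $X_\eta$ the analogous construction applied to $Y_2\to Y_1\to X_\eta$ returns $Y$, a statement about $G$-torsors over the single scheme $X_\eta$ that follows from the classical theory, \cite{DemGab}, III, \S 4).

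The step I expect to be the main obstacle is verifying that the constructed $Z\to X$ is actually a torsor — i.e. fpqc-locally trivial with the correct structure group — rather than merely a flat affine $X$-scheme with a $G'$-action. Over the generic fiber this is guaranteed, but over the special fiber the tower $Z_2\to Z_1\to X$ genuinely need not assemble into a torsor (this is exactly the phenomenon flagged by the remark "In general $Z_2\to X$ need not be a torsor" preceding the theorem); the point is that the contracted-product construction repairs this, and the verification amounts to checking that $Z_1\to X$ trivializes $Z$, reducing to the assertion that $Z_2\times^{G_2'}G'\to Z_1$ pulled back along an fpqc cover trivializing $Z_1$ becomes a trivial $G'$-torsor, which in turn reduces to the corresponding statement for the $G_2'$-torsor $Z_2\to Z_1$ after the same base change. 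Throughout, the sections $z_1,z_2$ (extending $y_1,y$) are essential for matching up the trivializations coherently and for pinning down $G'$ as a group scheme rather than just a torsor under its own automorphisms; I would use them systematically wherever a choice must be made, and I expect no need to extend scalars or blow up at this stage — those operations enter only later, in Theorem \ref{teoPRINCIPALE}, to guarantee the hypothesis that the individual components $Y_1\to X_\eta$ and $Y\to Y_1$ can be extended.
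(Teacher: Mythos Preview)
Your proposal has the right overall shape but contains a genuine gap at exactly the point you flag as ``the main obstacle.'' You propose to build $G'$ as an extension of $G_1'$ by $G_2'$ ``as the schematic closure, inside some auxiliary finite flat group scheme through which everything factors,'' and then to set $Z = Z_2\times^{G_2'}G'$ ``reinterpreted over $X$.'' Neither step is made concrete, and neither works as stated. For the first, you never say what this auxiliary group scheme is; there is no a priori finite flat $S$-group scheme containing both $G_1'$ and $G_2'$ in which to take a closure. For the second, the contracted product $Z_2\times^{G_2'}G'$ is a $G'$-torsor over $Z_1$, not over $X$: as a scheme over $X$ it has degree $|G_1'|\cdot|G'|$, which is too large, and there is no reason for it to descend along $Z_1\to X$ without further compatibility data. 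Your proposed verification (``$Z_1\to X$ trivializes $Z$'') addresses local triviality over $Z_1$, which is automatic, but not descent to $X$. The real issue is that the tower $Z_2\to Z_1\to X$ carries no $G'$-action on $Z_2$ extending the $G_2'$-action and covering the $G_1'$-action on $Z_1$; producing such an action is equivalent to producing the torsor, so the argument is circular.

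The paper resolves this by invoking Garuti's ``Galois closure'' theorem (\cite{Gar}, \S 2, Theorem 1): from the tower $Z_2\to Z_1\to X$ one obtains an $S$-scheme $T$ and flat $S$-group schemes $N\triangleleft M\triangleleft H$ such that $T\to X$ is an $H$-torsor, $T\to Z_1$ an $M$-torsor, and $T\to Z_2$ an $N$-torsor, with $H/M\simeq G_1'$ and $M/N\simeq G_2'$. This $H$ is precisely the ``auxiliary group scheme'' you were missing. One then checks $N\trianglelefteq H$ (via schematic closure, since $N_\eta=\ker(H_\eta\to G)$), so that $G':=H/N$ is a finite flat extension of $G_1'$ by $G_2'$, and sets $Z:=T\times^H(H/N)$. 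Because $T\to X$ is genuinely an $H$-torsor, this contracted product is automatically a $G'$-torsor over $X$; compatibility with base change gives $Z_\eta\simeq Y$. The moral is that one cannot build $G'$ and $Z$ directly from $Z_2$, $Z_1$, $G_2'$, $G_1'$ alone: one must first pass to a dominating torsor $T$ that sees the whole tower at once.
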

\proof  The ``only if'' part is easy and left to the reader. So  consider the tower of torsors $Z_2\to Z_1\to X$, that exists by assumption. Then by a result of Garuti (\cite{Gar}, \S 2, Theorem 1) there exist flat $S$-group schemes of finite type $N$, $M$ and $H$, an $S$-scheme $T$ (provided with $t\in T_x(S))$ and morphisms $T\to Z_2$, $T\to Z_1$ and $T\to X$  which are respectively a $N$-torsor, $M$-torsor and $H$-torsor (all pointed), such that the following diagram commutes:

$$\xymatrix{ & T \ar[dl]_{N}\ar[ddl]|-{M}\ar[dddl]^{H} \\ Z_2\ar[d]_{G_2'}& \\Z_1\ar[d]_{G_1'} & \\X & }$$

\noindent then in particular there are canonical faithfully flat group scheme morphisms  $\gamma_2: M\to G_2'$ and $\gamma_1: H\to G_1'$ over $S$ where $M\simeq ker(\gamma_1)$ and $N\simeq ker(\gamma_2)$. First we observe that  $N$ is normal in $H$: indeed generically $N_{\eta} \unlhd H_{\eta}$ because $N_{\eta}$ is the kernel of the natural morphism $H_{\eta}\to G$; but $N$ coincides with the schematic closure of $N_{\eta}$ in $H$ then $N\unlhd  H$. Hence we can construct the quotient $H/N$, which is a $S$-flat group scheme (\cite{ANA} Th\'eor\`eme 4.C)
that fits in the following exact sequence (\cite{DemGab}, III, \S 3, n$^{\circ}$ 3, 3.7 a)) 
\begin{equation}\label{sequenzaFpqc}
\xymatrix{0\ar[r] & G_2'\ar[r] & H/N \ar[r] & G_1'\ar[r] & 0}
\end{equation}
then it is finite since $G_2'$ and $G_1'$ are (\cite{BER}, Proposition 9.2, (viii)). Let $\gamma:H\to (H/N)$ be the canonical faithfully flat morphism. Thus we  construct the contracted product $Z:=T\times^H (H/N)$ via $\gamma$ which is a $H/N$-torsor. The contracted product commuting with base change (\cite{DemGab}, III, \S 4, n$^{\circ}$ 3, 3.1), we have $Z_{\eta}:=(T\times^H (H/N))_{\eta}\simeq T_{\eta}\times^{H_{\eta}} H_{\eta}/N_{\eta}$ then in particular $T_{\eta}\times^{H_{\eta}} H_{\eta}/N_{\eta}\simeq Y$ as a $G$-torsor over $X$ hence $Z$ is a $H/N$-torsor over $X$ extending the starting one. 
\endproof

\begin{rem} Let $T_1$ and $T_2$ be (resp.) a $G_1$-torsor over $X$ pointed in $t_1\in {T_1}_x(S)$ and a $G_2$-torsor over $X$ pointed in $t_2\in {T_2}_x(S)$. Recall that a $X$-morphism $T_1\to T_2$ sending $y_1\mapsto y_2$ commutes necessarily with the actions of their structural group schemes. We have implicitly used this fact in previous lemma without mentioning it.
\end{rem}

\begin{rem}\label{remFinale} Keeping notations of theorem \ref{teoEstTorri} observe that $Z$ factors through $Z_1$ and in particular $Z\to Z_1$ is a  $G_2'$-torsor. Indeed 
$$ Z\times^{H/N}G_1'\simeq (T\times^H H/N)\times^{H/N}G_1' \simeq T\times^H G_1'\simeq Z_1 $$ then $Z\to Z_1$ is a $ker(H/N\to G_1')$-torsor.
\end{rem}

\begin{cor}\label{corEstensione2Torsori}  Let $G$ be a finite and solvable $K$-group scheme and $Y\to X_{\eta}$ a $G$-torsor.  Then  $Y\to X_{\eta}$ can be extended to a finite solvable $G'$-torsor $Z\to X$ for some model $G'$ of $G$ if and only if its commutative components  can be extended. 
\end{cor}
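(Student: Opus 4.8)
The plan is to argue by induction on the length $n$ of a fixed solvable series $0=H_n\triangleleft\cdots\triangleleft H_1\triangleleft H_0=G$, reducing at each step to the length-two situation already handled by Theorem \ref{teoEstTorri}. If $n\le 1$ (or, more generally, if $H_1$ is trivial) then $G$ is commutative, $Y\to X_{\eta}$ is its own unique commutative component, and there is nothing to prove. So assume $n\ge 2$ with $H_1$ non-trivial, and set $G_1:=G/H_1$, which is commutative since it equals $H_0/H_1$, and $G_2:=H_1$, which is solvable of length $n-1$ via the truncated series $0=H_n\triangleleft\cdots\triangleleft H_1$. As in \S\,2.1, factor $Y\to X_{\eta}$ into the commutative $G_1$-torsor $Y_1\to X_{\eta}$ and the $H_1$-torsor $Y\to Y_1$; by construction the commutative components of $Y\to X_{\eta}$ are exactly $Y_1\to X_{\eta}$ together with the commutative components of the shorter solvable torsor $Y\to Y_1$.

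For the ``if'' direction, assume (in the iterated sense of \S\,2.2) that all commutative components of $Y\to X_{\eta}$ extend. In particular $Y_1\to X_{\eta}$ extends to a $G_1'$-torsor $Z_1\to X$ for some model $G_1'$ of $G_1$; since $Z_1$ is finite over $X$, the composite $Z_1\to S$ is again faithfully flat of finite type and carries the section $z_1\in (Z_1)_x(S)$, so $Z_1\to S$ with generic fibre $(Z_1)_{\eta}=Y_1$ satisfies the hypotheses under which the induction runs. The remaining commutative components of $Y\to X_{\eta}$ are precisely those of the $H_1$-torsor $Y\to Y_1$, and by assumption they extend over $Z_1$; as $H_1$ is solvable of length $n-1$, the inductive hypothesis produces a model $G_2'$ of $H_1$ — automatically solvable by the construction — together with a finite solvable $G_2'$-torsor $Z_2\to Z_1$ extending $Y\to Y_1$. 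Now Theorem \ref{teoEstTorri} applies with this $G_1$, $G_2$: since both $Y_1\to X_{\eta}$ and $Y\to Y_1$ extend, $Y\to X_{\eta}$ extends to a finite $G'$-torsor $Z\to X$ for some model $G'$ of $G$. Finally $G'$ is solvable: by the exact sequence (\ref{sequenzaFpqc}) it fits into $0\to G_2'\to G'\to G_1'\to 0$ with $G_1'$ commutative (a finite flat group scheme over the Dedekind base $S$ with commutative generic fibre is commutative, since the commutator morphism is a morphism of $S$-flat schemes vanishing on the schematically dense generic fibre) and $G_2'$ solvable, so prepending a solvable series of $G_2'$ to $G_2'\triangleleft G'$ gives a solvable series of $G'$; by Remark \ref{remFinale} the associated tower of commutative torsors $Z\to\cdots\to Z_1\to X$ realises $Z\to X$ as the desired solvable extension.

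For the ``only if'' direction, suppose $Y\to X_{\eta}$ extends to a finite solvable $G'$-torsor $Z\to X$. Taking schematic closures in $G'$ of the terms of the chosen series of $G$ yields a solvable series of $G'$: closures of subschemes of $(G')_{\eta}$ in the $S$-flat scheme $G'$ are again $S$-flat, they remain normal in one another because conjugation fixes the generic fibres hence their closures, and the successive quotients are commutative since a morphism of $S$-flat group schemes trivial on the generic fibre is trivial. The corresponding tower of commutative torsors over $X$ has generic fibre the tower of commutative components of $Y\to X_{\eta}$, so each of the latter extends; alternatively this is a direct iteration of the easy implication of Theorem \ref{teoEstTorri}.

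The one point requiring care is not any single algebro-geometric step but the bookkeeping in the ``if'' direction: ``the commutative components extend'' must be read compatibly, i.e. along a tower of models $Z_1,Z_2,\dots$ built one layer at a time, exactly as in the setup of \S\,2.2, so that the inductive hypothesis may legitimately be invoked over $Z_1$ rather than over $X$. Once this reading is granted — and it is the only sensible one, given that the extension of the $G_2$-component in \S\,2.2 is by definition an extension over the model $Z_1$ of $Y_1$ — the whole statement follows formally from Theorem \ref{teoEstTorri} and Remark \ref{remFinale}.
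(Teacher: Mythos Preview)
Your proof is correct and follows the same strategy as the paper: the paper's argument is the one-liner that the length-$2$ case is exactly Theorem~\ref{teoEstTorri} and that the general case follows by repeating this construction, which is precisely the induction you spell out. Your additional care in verifying that the resulting $G'$ is solvable and in making the ``only if'' direction explicit via schematic closures simply fills in details the paper leaves to the reader.
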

\proof The case where $G$ has a normal series of length $n=2$ is exactly theorem \ref{teoEstTorri}. With a little effort this procedure can be generalized to the case where $G$ has no normal series of length $n=2$, simply repeating Garuti's construction and theorem \ref{teoEstTorri}.
\endproof

\section{Extension of solvable torsors}
In the situation of diagram (\ref{diagTorsoriBrutti}) we now assume that $G_1$ and $G_2$ are commutative. In \cite{Antei2}, Theorem 3.1 we have explained how to extend finite quotient\footnote{Over any base scheme $S$ a pointed $G$-torsor $Y\to X$ over $S$ is said to be quotient if $X$ has a fundamental group scheme $\pi_1(X,x)$ (cf. for instance \cite{Antei3}, where the existence of the fundamental group scheme is studied) and the canonical morphism of $S$-group schemes $\pi_1(X,x)\to G$ is faithfully flat} pointed commutative torsors from $X_{\eta}$ to $X$ where $X$ needs to satisfy some strong assumptions (\cite{Antei2}, Notation 2.20). Thus for such $X$, it is not difficult  to find a finite, flat and commutative $S$-group scheme $G_1'$ as well as a $G_1'$-torsor   $Z_1\to X$ that extends the $G_1$-torsor $Y_1\to X_{\eta}$. Unfortunately, even if we can easily find schemes  $X$ satisfying these strong conditions (loc. cit. \S 3.2), it is improbable that  $Z_1\to S$ satisfy the same assumptions,  even in the case of curves: for instance it is asked $X$ to be smooth but in general $Z_1$ is not. So it is necessary to weaken the assumptions on $X$ hoping that $Z_1\to S$ is nice enough to be able to construct over $Z_1$ a torsor extending the $G_2$-torsor $Y\to Y_2$. 


\subsection{Commutative torsors}
%

For the sake of completeness we recall in a few lines the definition of Néron model and some properties which will be used in this paper. The reader can refer to \cite{BLR} for a deep discussion on the subject. Here we only consider Néron models of abelian varieties since it is the only case we will use. For the same reason the base scheme $S$ we consider will be the spectrum of a discrete valuation ring $R$:

\begin{defin}\label{definNeron} Let $R$ be a d.v.r. with fraction field $K$. Let $A$ be an abelian variety over $K$. A Néron model of $A$ is a smooth and separated $R$-scheme of finite type $\mathcal{N}_A$ whose generic fiber is isomorphic to $A$ and which satisfies the following universal property (called the Néron mapping property): for each smooth $R$-scheme $Y$ and each $K$-morphism $u:Y_{\eta}\to A$ there exists a unique morphism $u':Y\to \mathcal{N}_A$ extending $u$ where as usual $Y_{\eta}$ denotes the generic fibre of $Y$.
\end{defin}

\begin{prop}\label{propNeron} We keep notation of definition \ref{definNeron}. Then $A$ admits a Néron model $\mathcal{N}_A$ over $R$.
\end{prop}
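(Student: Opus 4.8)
\noindent This is the classical existence theorem of N\'eron for abelian varieties, and the plan is to reproduce its proof as presented in \cite{BLR}.

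First I would reduce to a convenient base ring. The formation of a N\'eron model is compatible with \'etale base change on $R$ and descends along the faithfully flat extension $R\to\widehat{R}$, and smoothness, separatedness and the N\'eron mapping property may all be tested after such a base change; hence one may assume that $R$ is complete (even strictly henselian) with algebraically closed residue field, so that in particular $R$ is excellent, which is what the desingularization arguments below need.

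Next I would construct a \emph{weak} N\'eron model. Fixing an ample line bundle on $A$ gives a locally closed immersion $A\hookrightarrow\mathbb{P}^n_K$; let $\bar A$ be the schematic closure of $A$ in $\mathbb{P}^n_R$, a proper flat $R$-model with $\bar A_\eta\simeq A$. Applying N\'eron's smoothening process (\cite{BLR}, Chapter 3), there is a finite sequence of blow-ups with centres in the special fibres producing a model $X\to\bar A$ whose smooth locus $X^{\mathrm{sm}}$ is a weak N\'eron model of $A$, i.e. a smooth separated $R$-scheme of finite type with generic fibre $A$ such that $X^{\mathrm{sm}}(R')\to A(K')$ is bijective for every unramified local extension $R'$ of $R$ with fraction field $K'$.

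The main step, and the real obstacle, is to pass from $X:=X^{\mathrm{sm}}$ to a genuine N\'eron model by exploiting the group law. The multiplication $m\colon A\times_K A\to A$ and the inversion of $A$ induce rational maps on $X\times_R X$ and on $X$ defined on schematically dense open subschemes, and using that $X$ is smooth and absorbs all unramified points one checks these data form a birational group law over $R$ in the sense of Weil. One then invokes the relative version of Weil's extension theorem (\cite{BLR}, Chapters 5 and 6): such a birational group law can be enlarged to a smooth separated $R$-group scheme $\mathcal{N}_A$ of finite type containing a dense open of $X$, obtained concretely by gluing translates of $X$ along the birational group law. Verifying the hypotheses of this theorem over the base $R$, and that the resulting $R$-group scheme is separated and of finite type, is where essentially all the work lies. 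Finally, for the N\'eron mapping property: given a smooth $R$-scheme $Y$ and $u\colon Y_\eta\to A$, uniqueness of an extension follows from separatedness of $\mathcal{N}_A$ and schematic density of $Y_\eta$ in $Y$; for existence, $Y$ being smooth is normal, so by the valuative criterion it suffices to extend after localizing at the codimension-one points of $Y_s$, i.e. when $Y=\mathrm{Spec}(R')$ for a discrete valuation ring $R'$ dominating $R$, the unramified case being the weak N\'eron property of $X$ and the general case being reduced to it by translating $u$ by a suitable section of $\mathcal{N}_A$ (\cite{BLR}, Chapter 1, \S1.2 and Chapter 4).
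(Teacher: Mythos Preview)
Your sketch is correct and follows the classical construction from \cite{BLR}, which is exactly the reference the paper invokes: the paper's own proof consists solely of the citation ``See for instance \cite{BLR}, \S 1.3, Corollary 2.'' So you have taken the same approach, merely unpacking the reference rather than citing it.
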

\begin{proof}See for instance \cite{BLR}, \S 1.3, Corollary 2.
\end{proof}

By the Néron mapping property the Néron model $\mathcal{N}_A$ of $A$ is unique up to canonical isomorphism and it is a commutative group scheme.
Unfortunately in general $\mathcal{N}_A$ is not  an abelian scheme and not even a semi-abelian scheme. 
%
%
%
When  $\mathcal{N}_{A}$ is an abelian scheme then we simply say that $A$ has abelian (or good) reduction. If $\mathcal{N}_{A}$ is not an abelian scheme but there exists a finite Galois extension $L/K$ such that the Néron model $\mathcal{N}_{A_L}$ of $A_L:=A\times_{Spec(K)} {Spec(L)}$ is an abelian scheme over the integral closure $R'$ of $R$ in $K$ then we say that $A$ has potentially abelian (or potentially good) reduction. \\

Let $X$ be an $S$-scheme and $X\to S$ a proper morphism of finite type, then in what follows we denote by $Pic_{(X/S)(fppf)}$ the sheaf, in the fppf topology, associated to the relative Picard functor given by
$$Pic_{X/S}(T):=Pic(X\times_S T)/Pic(T)$$ for any $S$-scheme $T$ (see \cite{Antei2}, \S 2 for a brief introduction and \cite{KL} for a complete reference on this topic)\footnote{N.B.: here we have used Kleiman's notation. In \cite{BLR}, \S 8.1, Definition 2, however, our  $Pic_{(X/S)(fppf)}$ is called ``the relative Picard functor'' and denoted $Pic_{X/S}$.}. It is known that  for any $s\in S$ the sheaf $Pic_{(X_s/k(s))(fppf)}$ is represented by a group scheme $\mathbf{Pic}_{X_s/k(s)}$ whose identity component is denoted by $\mathbf{Pic}^0_{X_s/k(s) }$; over $S$ we denote by $Pic^0_{X/S}$ the subfunctor of $Pic_{(X/S)(fppf)}$ which consists to all elements whose restrictions to all fibers $X_s, s\in S$ belong to $\mathbf{Pic}^0_{X_s/k(s)}$. We recall the following result concerning the representability of $Pic^0_{X/S}$:

\begin{teo}\label{teoRaynaudBLR} Let $S$ be the spectrum of a d.v.r. $R$ and let $\eta:=Spec(K)$ be its generic point. Let $f:X\to S$ be a regular fibered surface (i.e. a projective flat morphism with $X$ an integral, regular  scheme of dimension $2$) with geometrically integral and smooth  generic fiber $X_{\eta}$ and provided with a section $x\in X(S)$. Then ${Pic}^0_{X/S}$ is represented by a  separated and smooth $S$-scheme $\mathbf{Pic}^0_{X/S}$ and coincides with the identity component of the Néron model of $J:=\mathbf{Pic}^0_{X_{\eta}/K}$.
\end{teo}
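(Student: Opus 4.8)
The statement is, in essence, Raynaud's comparison between the relative Picard functor of a regular model and the N\'eron model of the Jacobian (\cite{BLR}, \S 9.5 and \S 9.7), so the plan is to put ourselves in the setting of that theorem and then indicate why the comparison map is an isomorphism. I would begin by exploiting the section $x$ twice. Since $X$ is integral and normal and $f$ is projective with geometrically integral generic fiber, $f_*\mathcal O_X$ is a finite, torsion-free $R$-subalgebra of $K$, hence equal to $R$; thus $f_*\mathcal O_X=\mathcal O_S$. Moreover $x(S)$ meets the closed fiber $X_s$ at a single point $\xi$ with $x(S)\cap X_s=\mathrm{Spec}\,k(s)$ reduced, which forces $\xi$ to lie transversally on a component of multiplicity one; in particular the greatest common divisor of the geometric multiplicities of the components of $X_s$ equals $1$, and by Raynaud's criterion (\cite{BLR}, \S 9.1) the morphism $f$ is cohomologically flat in dimension $0$.

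Next I would recall the representability and smoothness. Because $f$ is projective, flat, cohomologically flat in dimension $0$ and has a section, $Pic_{(X/S)(fppf)}$ is represented by a separated $S$-group scheme locally of finite type (\cite{BLR}, \S 9.4). Since the fibers of $f$ have dimension $1$, the obstruction to deforming a line bundle on a fiber lies in $H^2(X_s,\mathcal O_{X_s})=0$, so $\mathbf{Pic}_{X/S}$ is formally smooth, hence smooth, over $S$. The subfunctor $Pic^0_{X/S}$ is then represented by an open subgroup scheme $\mathbf{Pic}^0_{X/S}$ (\cite{BLR}, \S 9.5), which is smooth and separated over $S$, of finite type (as $\mathbf{Pic}^0$ of a proper curve over a field is of finite type), with connected fibers, and whose generic fiber is $\mathbf{Pic}^0_{X_\eta/K}=J$. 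Since $X_\eta$ is a smooth proper geometrically connected curve, $J$ is an abelian variety, so it admits a N\'eron model $\mathcal N_J$ by Proposition \ref{propNeron}.

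Now I would construct and analyse the comparison morphism. Applying the N\'eron mapping property to the smooth $S$-scheme $\mathbf{Pic}^0_{X/S}$ and to the identity $J\to J=(\mathcal N_J)_\eta$ gives a unique $S$-morphism $u\colon\mathbf{Pic}^0_{X/S}\to\mathcal N_J$ extending the identity; by the uniqueness clause $u$ is a homomorphism of $S$-group schemes, and since $\mathbf{Pic}^0_{X/S}$ has connected fibers it factors through the identity component $\mathcal N_J^0$. It then suffices to show that $u\colon\mathbf{Pic}^0_{X/S}\to\mathcal N_J^0$ is an isomorphism. It is one on generic fibers and both sides are smooth and of finite type over $S$; if one knows in addition that $u$ is an open immersion, then its image is an open subgroup scheme of $\mathcal N_J^0$ which on each fiber is clopen (being a union of translates of an open set), hence equals the whole fiber by connectedness, so $u$ is surjective and therefore an isomorphism.

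The heart of the matter, and the main obstacle, is precisely that $u$ is an open immersion, i.e. Raynaud's theorem itself; it rests on the regularity of $X$ and splits into two points. First, \emph{essential surjectivity}: for $R'$ a local ring of a smooth $R$-scheme, an $R'$-point of $\mathcal N_J^0$ corresponds to a line bundle on $X_\eta\times_K\mathrm{Frac}(R')$ which extends to $X\times_S\mathrm{Spec}\,R'$ because on a regular scheme every Weil divisor is Cartier, and the extension can be corrected by vertical components so as to have degree $0$ along every component of each closed fiber, using that the gcd of the multiplicities is $1$. Second, \emph{uniqueness}: this extension is unique up to such a correction, which makes $u$ a monomorphism; a smooth monomorphic homomorphism of $S$-group schemes is an open immersion. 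I would carry out these two points along the lines of \cite{BLR}, \S 9.5 and \S 9.7; alternatively this last step is exactly \cite{BLR}, \S 9.5, Theorem 4 combined with \S 9.7.
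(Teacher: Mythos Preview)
Your proposal is correct and follows essentially the same route as the paper: both arguments use the section to deduce that the gcd of the geometric multiplicities of the components of $X_s$ is $1$ (the paper cites \cite{BLR}, \S 9.5, Remark 5 for this), and then invoke \cite{BLR}, \S 9.5, Theorem 4 directly. Your version simply unpacks more of the intermediate steps (cohomological flatness, representability, smoothness, and the mechanism behind the comparison map), but the skeleton and the key citation are the same.
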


\begin{proof} First we recall that under these assumptions $J$ is an abelian variety. According to \cite{BLR}, \S 9.5 Remark 5 the existence of a section implies that the greatest common divisor of the geometric multiplicities of the irreducible components of the special fiber $X_s$ of $X$ in $X_s$ is one. Then by loc. cit. \S 9.5, Theorem 4, ${Pic}^0_{X/S}$ is represented by a  separated and smooth $S$-scheme $\mathbf{Pic}^0_{X/S}$ which coincides with the identity component of the Néron model of $J:=\mathbf{Pic}^0_{X_{\eta}/K}$. 
\end{proof}

Let us denote by $u:X_{\eta}\to J$  the canonical closed immersion (cf. for instance \cite{KL} Exercise 9.4.13) usually known as the Abel-Jacobi map. In next proposition we construct, when possible, a morphism $u':X\to \mathcal{N}_{J}$ whose generic fiber is isomorphic to $u$, where $\mathcal{N}_{J}$ denotes the   N\'eron model $\mathcal{N}_{J}$ of $J$.

\begin{prop}\label{PropAbelJacobi} Let $S$, $R$, $K$ be as in theorem \ref{teoRaynaudBLR}. Let $f:X\to S$ be a  regular fibered surface with geometrically integral and smooth  generic fiber $X_{\eta}$ and provided with a section $x\in X(S)$. Let $J$ be the Jacobian of $X_{\eta}$,  $\mathcal{N}_J$ its Néron model and $u:X_{\eta}\to J$  the canonical closed immersion. Assume moreover that $J$ has abelian reduction.
%
Then there exists a morphism $u':X\to \mathcal{N}_{J}$ whose generic fiber is isomorphic to $u$.
\end{prop}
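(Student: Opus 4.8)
The plan is to build $u'$ from the Néron mapping property, which is exactly designed to extend $K$-morphisms from generic fibres of smooth $R$-schemes. First I would note that $X$ is smooth over $S$ — wait, no, $X$ is only assumed regular here; but the hypothesis in Proposition \ref{PropAbelJacobi} is inherited from Theorem \ref{teoRaynaudBLR}, where $f:X\to S$ is a regular fibered surface. So the first genuine step is to reduce to the smooth case or argue directly.

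Let me think again about what's actually available.

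Wait — the Néron mapping property (Definition \ref{definNeron}) requires the source $Y$ to be a *smooth* $R$-scheme. Here $X$ is only regular, not smooth over $R$. So I cannot directly feed $X$ into the mapping property. The key trick must be: since $J$ has abelian reduction, $\mathcal{N}_J$ is an abelian scheme, hence *proper* over $S$; then I can try to extend $u$ using properness (valuative criterion) plus the fact that $X$ is regular of dimension $2$, so the locus where a rational map to a proper scheme fails to be defined has codimension $\geq 2$ — but on a regular surface that locus is then a finite set of closed points, and one needs a further argument (e.g. the abelian scheme has no rational curves, so the map extends even there). Alternatively, one restricts to the smooth locus $X^{\mathrm{sm}}$ of $X\to S$, which is a smooth $R$-scheme containing $X_\eta$ and whose complement in $X$ has codimension $\geq 2$, applies the Néron property there, and then extends over the remaining codimension-$2$ points.

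Here is the plan in order. \textbf{Step 1.} Since $J$ has abelian reduction, $\mathcal{N}_J$ is an abelian scheme over $S$, in particular proper and smooth over $S$. \textbf{Step 2.} Let $X^{\mathrm{sm}}\subseteq X$ be the largest open subscheme on which $f$ is smooth; since $X_\eta$ is smooth over $K$ we have $X_\eta\subseteq X^{\mathrm{sm}}$, and since $X$ is regular of dimension $2$ while $X_s$ is a curve, the complement $X\setminus X^{\mathrm{sm}}$ consists of finitely many closed points of $X_s$. Apply the Néron mapping property of $\mathcal{N}_J$ to the smooth $R$-scheme $X^{\mathrm{sm}}$ and the morphism $u|_{X_\eta}$: we obtain a unique $u^{\mathrm{sm}}:X^{\mathrm{sm}}\to\mathcal{N}_J$ extending $u$. \textbf{Step 3.} Extend $u^{\mathrm{sm}}$ across the finitely many missing closed points. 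Here I would invoke that $X$ is regular (so each missing point is a regular point of a surface), that $\mathcal{N}_J$ is proper (so by the valuative criterion the map extends in codimension $1$ around each such point, i.e. over the punctured local ring), and finally that an abelian scheme contains no rational curves / the classical result that a rational map from a regular scheme to a proper group scheme — more precisely, to an abelian scheme, or any scheme with no rational curves through the relevant points — defined outside a codimension-$2$ set extends everywhere (a version of Weil's extension theorem, cf. \cite{BLR}, \S 8.4, or the fact that $\mathcal{N}_J$ being smooth and separated the indeterminacy locus is pure of codimension $1$). This gives a morphism $u':X\to\mathcal{N}_J$ on all of $X$ extending $u$.

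\textbf{Main obstacle.} The delicate point is Step 3: extending the morphism across the codimension-$2$ points of $X$ lying on the special fibre. The Néron property alone only handles the smooth locus, and properness of $\mathcal{N}_J$ only gives extension in codimension $1$; closing the last gap requires either Weil's theorem on extending rational maps from a regular (or even just normal, factorial) scheme to a group scheme — which needs the target to have no rational curves, true for an abelian scheme — or an explicit blow-up/descent argument. If one is content to weaken the conclusion, one could instead pass to a resolution or note that in the intended application (Theorem \ref{teoPRINCIPALE}) one is allowed to blow up $X$ along a closed subscheme of $X_s$, so composing with such a blow-up $\widetilde{X}\to X$ and spreading $u'$ out there would also suffice; but the clean statement is obtained via the extension-of-rational-maps theorem applied to the regular surface $X$ and the smooth separated group scheme $\mathcal{N}_J$.
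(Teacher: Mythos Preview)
Your overall strategy is sound and genuinely different from the paper's, but there is one gap to flag. In Step~2 you assert that $X\setminus X^{\mathrm{sm}}$ consists of finitely many closed points; this can fail when $X_s$ has an irreducible component of multiplicity $>1$, since $X_s$ is then non-reduced along that whole component and hence $f$ is not smooth there, while $X$ may still be regular. The remedy is the one you already gesture at in your ``Main obstacle'' paragraph: skip the N\'eron mapping property and use properness of $\mathcal{N}_J$ together with regularity of $X$ (valuative criterion at the codimension-$1$ points, whose local rings are discrete valuation rings) to extend $u$ to an open whose complement is a finite set of closed points of $X_s$. Your Step~3 then goes through: eliminate indeterminacy by blowing up closed points on the regular surface $X$, observe that each exceptional $\mathbb{P}^1$ maps into the abelian variety $(\mathcal{N}_J)_s$ and is therefore contracted to a point, and descend. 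Note that Weil's theorem as stated in \cite{BLR}, \S 4.4, does require the source to be smooth over $S$, so it is indeed the no-rational-curves argument, not Weil's theorem verbatim, that you need here.

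The paper takes an entirely different route. Rather than extend the rational map directly, it forms the schematic closure $C$ of $X_\eta$ inside the proper scheme $\mathcal{N}_J$, desingularizes it to a regular model $\widetilde{C}$ of $X_\eta$, and then invokes Theorem~\ref{teoRaynaudBLR} to identify $\mathcal{N}_J\simeq\mathbf{Pic}^0_{X/S}\simeq\mathbf{Pic}^0_{\widetilde{C}/S}$. The morphism $\widetilde{C}\to\mathbf{Pic}^0_{X/S}$ then corresponds to a class in $Pic(X\times_S\widetilde{C})/Pic(\widetilde{C})$; since $X$ and $\widetilde{C}$ are both regular models of $X_\eta$ one has $Pic(X)\simeq Pic(X_\eta)\simeq Pic(\widetilde{C})$, so swapping the two factors gives a class in $Pic(X\times_S\widetilde{C})/Pic(X)$ and hence a morphism $X\to\mathbf{Pic}^0_{\widetilde{C}/S}\simeq\mathcal{N}_J$. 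Your argument is more direct and avoids the Picard-functor machinery and the auxiliary model $\widetilde{C}$; the paper's argument, in exchange, avoids any appeal to elimination of indeterminacy or to the absence of rational curves on abelian schemes, relying instead on the symmetry of the Poincar\'e bundle.
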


\begin{proof} If $X$ were smooth this would be the Néron mapping property of the Néron model $\mathcal{N}_{J}$. Since in general this does not happen  then we argue as follows:  by assumption $\mathcal{N}_{J}$ is an  abelian scheme (thus proper), then  construct the schematic closure $C:=\overline{X_{\eta}}$ of $X_{\eta}$ in $\mathcal{N}_{J}$, i.e. the only closed subscheme of $\mathcal{N}_{J}$, flat over $S$ with generic fiber isomorphic to $X_{\eta}$. It is an integral scheme (\cite{EGAI} Proposition 9.5.9), proper over $S$ (because $\mathcal{N}_{J}$ is) whose special fiber is equidimensional of dimension one  (\cite{Liu}, Ch. 4, Proposition 4.16). Now we desingularize $C$, i.e. we construct a projective (\cite{Liu}, Ch. 8, Theorem 3.16) regular model $\widetilde{C}$ of $X_{\eta}$ and a morphism $\widetilde{C}\to C$ which is generically an isomorphism. In particular $\mathcal{N}_{J}\simeq \mathcal{N}_{J}^0\simeq \mathbf{Pic}^0_{X/S}\simeq \mathbf{Pic}^0_{\widetilde{C}/S}$ by theorem \ref{teoRaynaudBLR} and from $\widetilde{C}\to \mathbf{Pic}^0_{X/S}$ one obtains the desired morphism $u':X\to \mathcal{N}_{J}$. Indeed the morphism $\widetilde{C}\to \mathbf{Pic}^0_{X/S}$ is an element of $\mathbf{Pic}^0_{X/S}(\widetilde{C})$, then in particular this corresponds to an element 

$$\xi\in \frac{Pic(X\times \widetilde{C})}{Pic(\widetilde{C})}={Pic}_{X/S}(\widetilde{C})={Pic}_{(X/S)(fppf)}(\widetilde{C})$$ 
(use \cite{Liu},  Ch. 8, Corollary 3.6, (c) then apply  \cite{BLR}, \S 8.1 Proposition 4) but since $\widetilde{C}$ and $X$ are both regular then $Pic(X)\simeq Pic(X_{\eta})\simeq Pic(\widetilde{C})$ and consequently 

$$\frac{Pic(X\times \widetilde{C})}{Pic(\widetilde{C})}\simeq  \frac{Pic(X\times \widetilde{C})}{Pic(X)}=Pic_{\widetilde{C}/S}(X)={Pic}_{(\widetilde{C}/S)(fppf)}(X).$$ 
Hence, starting from $\xi$, we get a morphism $X\to Pic_{(\widetilde{C}/S)(fppf)}$ that on the generic and special fibers factors (resp.) through $u:X_{\eta}\to J$ and $X_s\to \mathbf{Pic}^0_{\widetilde{C}_s/k(s)}$ (here $s$ denotes the special point) since $X$ has geometrically connected fibers, thus obtaining a morphism $X\to \mathbf{Pic}^0_{\widetilde{C}/S}$ that composed with $\mathbf{Pic}^0_{\widetilde{C}/S}\simeq \mathcal{N}_{J}^0\simeq \mathcal{N}_{J}$, gives the desired morphism $u':X\to \mathcal{N}_{J}$ extending $u:X_{\eta}\to J$ as described by the following diagram:
$$\xymatrix{X_{\eta} \ar[dr]_{u}\ar[r]\ar[dd]& X \ar@{-}[d]  \ar[dr]^{u'} & \\ &  J \ar[d] \ar[dl] \ar[r] & \mathcal{N}_J \ar[dl] \\ Spec(K)  \ar[r] & S & }$$

\end{proof}

A result due to Raynaud, that we state in our setting in the following theorem, shows that the hypothesis of proposition \ref{PropAbelJacobi} are satisfied in many relevant cases after eventually extending scalars:

\begin{teo}\label{teoReno} Let $R$ be a complete d.v.r. with residue characteristic $p>0$ and fraction field $K$. Let $X$ be a  smooth fibered surface  over $R$ with geometrically connected generic fiber and provided with a section $x\in X(S)$. Let $G$ be a finite and \'etale $K$-group scheme of order $p^n$ and $Y\to X_{\eta}$ a quotient $G$-torsor over the generic fiber of $X$, then the Jacobian $J_{Y}$ of $Y$ has potential abelian reduction. In particular every commutative component $Y_i$ of $Y$ has a Jacobian $J_{Y_i}$ with potential abelian reduction.
\end{teo}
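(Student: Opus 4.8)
The plan is to deduce the statement from Raynaud's theorem on the semi-stable reduction of curves endowed with the action of a finite $p$-group (his study of $p$-covers, \cite{Ray3}); the smoothness of $X$ over $R$ enters exactly in order to guarantee that the hypotheses of that theorem hold in our situation.

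First I would carry out the routine reductions. Replacing $R$ by its strict henselization changes neither the hypotheses nor the conclusion, since having potential abelian reduction is insensitive to this change of base; so we may assume the residue field algebraically closed. As $X\to S=Spec(R)$ is a smooth fibered surface with geometrically connected fibers, $X_{\eta}$ is a smooth projective geometrically connected curve over $K$ admitting the smooth proper model $X$ itself; hence $J=\mathbf{Pic}^0_{X_{\eta}/K}$ is an abelian variety with good reduction (by Theorem \ref{teoRaynaudBLR}, $\mathbf{Pic}^0_{X/S}$ is the identity component of the N\'eron model of $J$, and it is an abelian scheme because $X/S$ is smooth and proper). Because $G$ is finite and \'etale, the $G$-torsor $Y\to X_{\eta}$ is a finite \'etale morphism; being quotient it is connected, and being pointed it carries a $K$-rational point, so $Y$ is a smooth projective geometrically connected curve over $K$. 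After replacing $K$ by a finite extension over which $G$ becomes constant, $Y\to X_{\eta}$ is a connected \'etale Galois cover whose Galois group is the $p$-group $G(\overline{K})$. If $g_{Y}\le 1$ there is nothing to prove: when $g_{Y}=1$, Riemann--Hurwitz forces $g_{X_{\eta}}=1$ and $Y\to X_{\eta}$ to be an isogeny of elliptic curves, so $Y$ has good reduction already. We may thus assume $g_{Y}\ge 2$.

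The heart of the matter is then Raynaud's result: for a connected \'etale Galois cover $Y'\to X'$ of smooth projective geometrically connected curves over the fraction field of a complete discrete valuation ring of residue characteristic $p$, with $p$-group Galois group and with $X'$ of good reduction, the analysis of the stable reduction shows that, after a further finite extension of scalars, the dual graph of the special fiber of the stable model of $Y'$ is a tree; equivalently, the toric part of the N\'eron model of $J_{Y'}$ vanishes, i.e. $J_{Y'}$ has good reduction. Applying this to $Y\to X_{\eta}$ yields that $J_{Y}$ has potential abelian reduction, which is the assertion. I expect the only genuine work here to be the bookkeeping required to invoke Raynaud's statement in precisely the needed form, uniformly in the mixed- and equal-characteristic cases; should his formulation presuppose a prescribed Galois group, one first replaces $Y\to X_{\eta}$ by its Galois closure, which is again a connected \'etale $p$-cover of $X_{\eta}$ dominating $Y$.

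Finally, for the assertion on the commutative components: each component $Y_{i}$ of the solvable tower sits as an intermediate curve $Y\to Y_{i}\to X_{\eta}$, with $Y\to Y_{i}$ a finite surjective morphism of smooth projective geometrically connected curves; hence the pull-back homomorphism $J_{Y_{i}}\to J_{Y}$ has finite kernel, so $J_{Y_{i}}$ is isogenous to an abelian subvariety of $J_{Y}$ and inherits potential abelian reduction from $J_{Y}$ by the N\'eron--Ogg--Shafarevich criterion. (Equivalently, whenever $Y_{i}\to X_{\eta}$ is itself a quotient torsor under a finite \'etale $p$-group quotient of $G$, the first part of the argument applies to it verbatim.)
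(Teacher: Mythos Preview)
Your proposal is correct and follows essentially the same route as the paper: after making $G$ constant by a finite extension, both arguments simply invoke Raynaud's result \cite{Ray3}, Th\'eor\`eme~1, on $p$-group covers of a curve of good reduction. The paper's proof is terser---it records only that $G$ becomes constant and that geometric connectedness of $X_{\eta}$ forces that of $X_s$, then quotes Raynaud---whereas you spell out the ancillary reductions (passing to algebraically closed residue field, disposing of $g_Y\le 1$, deducing the statement for the $Y_i$ via the isogeny $J_{Y_i}\hookrightarrow J_Y$); none of this changes the substance.
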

\proof It is known that $G$ becomes constant after a finite Galois extension $L$ of $K$. Moreover since $X_{\eta}$ is geometrically connected then so is $X_s$ (\cite{Liu}, Ch. 8, Corollary 3.6, (c)), then finally use  \cite{Ray3}, Th\'eor\`eme 1.
\endproof 

We conclude this section with a result that will be used later:

\begin{teo}\label{teoExtAbel} Let $S$ be the spectrum of a d.v.r. $R$ and let $\eta:=Spec(K)$ be its generic point. Let $f:X\to S$ be a regular fibered surface provided with a section $x\in X(S)$.  Assume that  $f$ has  smooth generic fiber $X_{\eta}\to \eta$. Assume moreover that the Jacobian $J$ of $X_{\eta}$ has abelian reduction.  Then every finite, quotient, commutative, pointed  torsor over $X_{\eta}$ can be extended to a finite commutative pointed torsor over $X$.
\end{teo}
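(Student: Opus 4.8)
The plan is to reduce the extension of an arbitrary finite quotient commutative pointed torsor over $X_\eta$ to the case of torsors under a subgroup of the Jacobian $J$, and then to use the identification of $\mathbf{Pic}^0_{X/S}$ with the N\'eron model $\mathcal{N}_J^0$ provided by Theorem \ref{teoRaynaudBLR} together with the Abel--Jacobi morphism constructed in Proposition \ref{PropAbelJacobi}. First I would recall the structure, used already in \cite{Antei2}, Theorem 3.1: if $H$ is a finite commutative $K$-group scheme and $Y\to X_\eta$ is a quotient $H$-torsor pointed at $x_\eta$, then since the torsor is quotient the classifying map $\pi_1(X_\eta,x_\eta)\to H$ is faithfully flat; dualizing, one gets a closed immersion of Cartier duals $H^D\hookrightarrow \pi_1(X_\eta,x_\eta)^D$, and since $\mathbf{Pic}^0_{X_\eta/K}=J$ is (via the Abel--Jacobi map $u$) a quotient of the abelianized fundamental group scheme, the torsor $Y\to X_\eta$ is obtained by pullback along $u:X_\eta\to J$ from a torsor on $J$ under a finite subgroup scheme $N\subseteq J$. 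Concretely, $Y\simeq u^{*}(J'\to J)$ where $J'\to J$ is the isogeny with kernel $N=H^{D,D}\hookrightarrow J$ (up to replacing $J'$ by the corresponding torsor; for the pointed statement one uses the canonical point coming from the origin of $J$).

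Next I would extend the torsor on $J$ itself. By hypothesis $J$ has abelian reduction, so its N\'eron model $\mathcal{N}_J$ is an abelian scheme, and $N\subseteq J$ extends to a finite flat (hence closed) subgroup scheme $\mathcal{N}_N\subseteq\mathcal{N}_J$, namely the schematic closure of $N$; the quotient $\mathcal{N}_J/\mathcal{N}_N$ exists as an abelian scheme and the resulting isogeny $\mathcal{N}_{J'}:=\mathcal{N}_J/\mathcal{N}_N\to\mathcal{N}_J$ is a finite flat $\mathcal{N}_N$-torsor over $\mathcal{N}_J$, a model of $J'\to J$. Then I would pull this back along the morphism $u':X\to\mathcal{N}_J$ furnished by Proposition \ref{PropAbelJacobi}: set $Z:=X\times_{\mathcal{N}_J,u'}\mathcal{N}_{J'}$. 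This is a finite flat $\mathcal{N}_N$-torsor over $X$ (base change of a torsor is a torsor), it is commutative and pointed because $\mathcal{N}_N$ is commutative and one can transport the section through $x$ using $u'\circ x$ and the origin of $\mathcal{N}_{J'}$, and its generic fiber is $X_\eta\times_{J,u}J'\simeq Y$ as an $H$-torsor since $u'$ extends $u$ and $\mathcal{N}_N$ is a model of $N\cong H^{D,D}$, whose associated torsors under pullback along $u$ recover $H$-torsors after the Cartier-duality bookkeeping. Taking $G':=\mathcal{N}_N$ (a finite flat commutative model of $H$) then gives the desired extension.

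The main obstacle is the first step: making precise that every finite quotient commutative pointed torsor over $X_\eta$ does arise by pullback along the Abel--Jacobi map from an isogeny of $J$, with control of the group scheme and of the pointing. The cleanest route is to invoke \cite{Antei2}, Theorem 3.1, which already extends finite quotient commutative pointed torsors from $X_\eta$ to schemes satisfying the strong assumptions of \cite{Antei2}, Notation 2.20; what must be checked here is that the weaker hypotheses (regular fibered surface with smooth generic fiber, section, abelian reduction of $J$) still allow the construction, the point being exactly that one no longer builds the model by that theorem directly but rather realizes the torsor through $J$ and pushes it through $\mathcal{N}_J$. One has to verify that the Cartier dual of the classifying map lands in $J^D$ (equivalently, that the abelianization of $\pi_1(X_\eta,x_\eta)$ surjects onto the relevant finite quotient factoring through $J$), which follows from the fact that $J$ is a commutative quotient of $\pi_1(X_\eta,x_\eta)$ via $u$ and that a finite commutative quotient of a profinite-type group scheme factors through its abelianization; and that the schematic-closure construction of $\mathcal{N}_N$ in the abelian scheme $\mathcal{N}_J$ is flat over $S$, which holds because closures of flat subschemes in flat schemes over a Dedekind base are flat and a flat closed subgroup scheme of finite type over a d.v.r. whose generic fiber is finite is itself finite. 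Once these points are in place, the pullback along $u'$ and the identification on the generic fiber are formal.
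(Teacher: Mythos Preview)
Your strategy coincides with the paper's: realize the given torsor on $X_\eta$ as the pullback along the Abel--Jacobi map $u$ of a torsor on $J$, extend that torsor to the abelian scheme $\mathcal{N}_J$, and pull back along the morphism $u':X\to\mathcal{N}_J$ of Proposition~\ref{PropAbelJacobi}. The paper's proof is shorter only because it invokes the two steps as black boxes: \cite{Antei2}, Corollary~3.8 gives that every finite quotient commutative pointed $G$-torsor on $X_\eta$ is the $u$-pullback of such a torsor on $J$ (with the \emph{same} group $G$, pointed over $0_J$), and \cite{Antei}, \S2.2 furnishes a finite flat commutative model $H$ of $G$ together with an $H$-torsor on $\mathcal{N}_J$ extending the one on $J$. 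The pullback along $u'$ then concludes exactly as you wrote.

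Your attempt to unwind these two citations via Cartier duality and isogenies contains a directional slip that you should fix. If $N\hookrightarrow J$ is a finite closed subgroup, the quotient map is an $N$-torsor $J\to J/N$, i.e.\ a torsor \emph{over} $J/N$, not over $J$; likewise $\mathcal{N}_J\to\mathcal{N}_J/\mathcal{N}_N$ is a torsor over the quotient, so your ``$\mathcal{N}_{J'}:=\mathcal{N}_J/\mathcal{N}_N\to\mathcal{N}_J$'' is not an $\mathcal{N}_N$-torsor over $\mathcal{N}_J$, and its generic fibre is not a model of $J'\to J$. The correct duality is $\mathrm{Ext}^1(J,H)\cong\mathrm{Hom}(H^D,J^\vee)\cong\mathrm{Hom}(H^D,J)$: it is $H^D$, not $H^{D,D}\cong H$, that one embeds (or maps) into $J$, and the $H$-torsor over $J$ is the total space of the resulting extension $0\to H\to E\to J\to 0$. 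Extending to $\mathcal{N}_J$ then amounts to taking the schematic closure of $H^D$ inside $\mathcal{N}_J$ and forming the corresponding extension of $\mathcal{N}_J$ by its Cartier dual. With this correction your route is valid and recovers precisely what the cited references provide.
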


\begin{proof} 
Let $\mathcal{N}_J$ be the Néron model of $J$ and $u':X\to \mathcal{N}_J$ the morphism obtained in proposition \ref{PropAbelJacobi}. 
Let $G$ be a finite and flat $K$-group scheme, then according to \cite{Antei2}, Corollary 3.8 we know that every finite, quotient and commutative  $G$-torsor $T'\to X_{\eta}$  (pointed over $x_{\eta}\in X_{\eta}(K)$, generic fiber of $x$) is the pull back of a finite, quotient and commutative  $G$-torsor $T\to J$ (pointed over $0_{J}$). Now it is easy to find an $R$-model $H$ of $G$ (commutative, finite and flat) and a pointed (over $0_{\mathcal{N}_J}$) $H$-torsor $Y\to \mathcal{N}_J$ whose generic fiber is isomorphic to $T\to J$ (cf. for instance \cite{Antei}, \S 2.2). Then finally $Y':=Y\times_{\mathcal{N}_J} X$, the pull back over $u'$, is a finite, commutative  $H$-torsor over $X$  (pointed over $x$) extending $T'\to X_{\eta}$. 
\end{proof}

\subsection{Solvable torsors over curves}
\label{sez:solvable}
\begin{nota}\label{notaSolva}
From now on $S$ will be the spectrum of a complete discrete valuation ring $R$ with algebraically closed residue field $k$ of positive characteristic $p$ and with fraction field $K$. We will denote by  $\eta:=Spec(K)$ and $s:=Spec(k)$  respectively the generic and special points. Moreover $f:X\to S$ will be a regular fibered surface provided with a section $x\in X(S)$ with smooth and geometrically connected  (then geometrically integral) generic fiber $X_{\eta}\to Spec(K)$, pointed in $x_{\eta}$. Using a standard convention we say that a $S$-morphism of schemes $Z_1\to Z_2$ is a model map if it is generically an isomorphism.
\end{nota}

\begin{rem}Let $Y$ be any fibered surface over $S$. If we have a section $y\in Y(S)$ its generic fiber $Y_{\eta}$ is geometrically connected if and only if it is connected; moreover if $Y_{\eta}$ is geometrically reduced (resp. geometrically irreducible) then $Y_{\eta}$ is reduced (resp. irreducible) (\cite{Liu}, Ch. 3, \S 2, ex. 2.11 and 2.13).  Of course the same is true for the special fiber $Y_s$. Finally we recall that if $Y_{\eta}$ is integral then so is $Y$ (\cite{EGAI} Proposition 9.5.9). The N\'eron blowing up of $Y$ at a closed subscheme $C$ of $Y_s$ will be denoted by $Y^C$: the reader should refer to \cite{BLR}, \S 3.2, \cite{WW2}, \S 1 or \cite{ANA}, \S 2.1 for the definition and properties. We are not making any assumption on the characteristic of $K$.
\end{rem}

Before stating the principal result we need some preliminary lemmas. Lemma \ref{lemmaStandard}, as recalled in its proof, slightly generalizes \cite{WW2} Theorem\footnote{This result is stated by Waterhouse and Weisfeiler only for affine group schemes but, as observed by the authors, the group structure is never used (\cite{WW2}, page 552, Remark (4)).}  1.4, that we strongly use. 

\begin{lem}\label{lemmaStandard} Let $Y$ and $\widetilde{Y}$ be two schemes faithfully flat and of finite type  over $S$ and  $h:\widetilde{Y}\to Y$  an affine model map. Then $h$ is isomorphic to a composite of a finite number of N\'eron blowing ups.

\end{lem}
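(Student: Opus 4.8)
The plan is to reduce the statement to the affine-group-scheme version of Waterhouse--Weisfeiler (\cite{WW2}, Theorem 1.4), which is already phrased so that the group structure plays no role (see the footnote), and then to localize. First I would recall the content of the Waterhouse--Weisfeiler result in the form we need: if $\widetilde Y \to Y$ is an affine model map between affine schemes flat and of finite type over $S$, with $\widetilde Y$ and $Y$ faithfully flat, then it factors as a finite composite of N\'eron blowing-ups. So the only work is to pass from the affine case to the case where $Y$ is merely assumed faithfully flat and of finite type over $S$, while $h$ is affine (hence $\widetilde Y$ is obtained by taking $\mathbf{Spec}$ of a quasi-coherent $\mathcal{O}_Y$-algebra $\mathcal{A}$).

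Next I would cover $Y$ by finitely many affine opens $U_i = \mathrm{Spec}(B_i)$ (possible since $Y$ is noetherian, being locally noetherian and of finite type over the noetherian base $S$). Over each $U_i$ the restriction $h^{-1}(U_i) \to U_i$ is an affine model map of affine $S$-schemes, so by the affine case it is a finite composite of N\'eron blowing-ups $h^{-1}(U_i) \to Y_i^{(m_i)} \to \cdots \to Y_i^{(1)} \to U_i$, each centered at a closed subscheme of a special fiber. The point I would then make is that a N\'eron blowing-up is a local construction on the base in the following sense: the N\'eron blowing-up of $Y$ at a closed subscheme $C \subseteq Y_s$ restricts, over an open $U \subseteq Y$, to the N\'eron blowing-up of $U$ at $C \cap U_s$; this is immediate from the universal property (\cite{BLR}, \S 3.2, or \cite{ANA}, \S 2.1) or from the explicit description of the blowing-up algebra. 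Hence the blowing-ups performed over the various $U_i$ can be glued: taking the closed subscheme of $Y_s$ whose trace on each $U_i$ realizes the first blow-up over $U_i$ (after refining so that the number of steps and the centers are compatible on overlaps, using that on the generic fiber everything is an isomorphism so the centers live only in the special fibers and can be compared there), one builds a global N\'eron blowing-up $Y^{(1)} \to Y$ whose restriction over each $U_i$ is $Y_i^{(1)} \to U_i$. Iterating, one obtains a global tower $\widetilde Y = Y^{(m)} \to \cdots \to Y^{(1)} \to Y$ of N\'eron blowing-ups.

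The main obstacle I anticipate is the bookkeeping needed to make the local factorizations agree on overlaps $U_i \cap U_j$: a priori the number of blow-ups $m_i$ and the successive centers depend on the chart, so one must argue that a common refinement exists. The cleanest way around this is probably to avoid choosing charts altogether on the ``output'' side and instead characterize, intrinsically on $Y$, the first blow-up: $h$ being an affine model map, $\mathcal{A} = h_*\mathcal{O}_{\widetilde Y}$ is a coherent $\mathcal{O}_Y$-algebra containing $\mathcal{O}_Y$ with $\mathcal{A}_\eta = \mathcal{O}_{Y_\eta}$, so $\mathcal{A}$ is killed by a power $\pi^N$ of a uniformizer modulo $\mathcal{O}_Y$; one then takes the center $C_1 \subseteq Y_s$ to be (the support of) $\mathcal{A}/\mathcal{O}_Y$ reduced mod $\pi$, forms the global N\'eron blowing-up $Y^{(1)} \to Y$ at $C_1$, checks $\widetilde Y \to Y$ factors through it by the universal property of the blow-up (since $\pi^{-1}$ of the ideal of $C_1$ becomes regular on $\widetilde Y$), and observes that $N$ drops, so the process terminates after finitely many steps. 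This intrinsic description automatically glues, so overlap compatibility is free; the affine Waterhouse--Weisfeiler theorem is then only invoked to guarantee finiteness/termination locally, or can be re-proved in this global language directly. I would present the argument in this second form to keep it clean.
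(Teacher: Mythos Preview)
Your proposal is correct and follows essentially the same strategy as the paper: define the first blow-up center intrinsically on $Y$, iterate, and invoke the affine case (\cite{WW2}, Theorem~1.4) only to check termination on a finite affine cover, taking $n=\max_j n(j)$. The one refinement worth noting is that the paper's intrinsic center is simply the scheme-theoretic image $C_1:=h_s(\widetilde Y_s)\subseteq Y_s$, which makes the factorization of $h$ through $Y^{C_1}$ immediate from the universal property of the N\'eron blowing-up and yields as a by-product the preliminary observation that $h$ is already an isomorphism once $h_s$ is schematically dominant; your center described via $\mathcal{A}/\mathcal{O}_Y$ is less transparent for that purpose.
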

\begin{proof}First we observe that if the special fiber $h_s:\widetilde{Y}_s\to Y_s$ of $h$ is a schematically dominant morphism (i.e. $\mathcal{O}_{Y_s}\hookrightarrow {h_s}_{*}(\mathcal{O}_{\widetilde{Y}_s})$ is injective) then $h$ is an isomorphism: indeed let $U=Spec(A)$ be any open affine subset of $Y$ and  $V=Spec(A'):=h^{-1}(U)$ then consider $h_{|V}:V\to U$ and its special fiber $(h_{|V})_s:V_s\to U_s$ where $V_s=Spec(A'_k)=Spec(A'\otimes_R k)$ and $U_s=Spec(A_k)=Spec(A'\otimes_R k)$. We are thus reduced to consider the affine case, then one just needs to argue as in \cite{WW2}, Lemma 1.3. \\
Now we prove the statement of the lemma: if $h_s:\widetilde{Y}_s\to Y_s$ is schematically dominant there is nothing to do, otherwise consider  the scheme theoretic image $C_1:=h_s(\widetilde{Y}_s)$ of $\widetilde{Y}_s$ in $Y_s$. It  is a closed subscheme of $Y_s$ (\cite{EGAI}, \S 9.5). Now consider the N\'eron blowing up $Y^{C_1}$ of $Y$ in $C_1$  then $h$ factors through $Y_1:=Y^{C_1}$. Denote by $h_1:Y\to Y_1$ the  $S$-morphism obtained. If its special fiber ${(h_1)}_s$ is schematically dominant then $h_1:Y\simeq Y_1$ otherwise we set $C_2:={(h_1)}_s(\widetilde{Y}_s)$, $Y_2:=Y_1^{C_2}$ and we continue as before. Hence we conclude that $\widetilde{Y}\simeq \underleftarrow{lim}_iY_i$. If $Y$ and $\widetilde{Y}$ are affine then one argues as in \cite{WW2} Theorem 1.4 to conclude that we can stop after a finite number of steps, i.e. there exist $n\geq 0$ such that $\widetilde{Y}\simeq Y_n$.
If $Y$ and $\widetilde{Y}$ are not affine then let $\left\{U_j\right\}_{j \in J}$ be an affine open cover of $Y$ and $\left\{V_j:=h^{-1}(U_j)\right\}_{j \in J}$ the induced affine open cover of $\widetilde{Y}$. Since $Y$ is quasi compact
we can take $|J|<\infty$. To give $h$ is equivalent to give the family of morphisms 
$$\xymatrix{\{h_j:=(V_j \ar[r]^(.6){h_{|V_j}} & U_j\ar[r] & Y)\}_{j \in J}}$$
where we have given the $V_j$ and $U_j$ the induced subscheme structure (\cite{Har}, II, Theorem 3.3, Step 3). For any $j\in J$ set $C^1_j:=C_1\times_Y U_j$ (the scheme theoretic image $h_{|V_j}(V_j)$), $U_j^1:={U_j}^{C^1_j}$ and so on: it follows that $V_j\simeq \underleftarrow{lim}_i {U_j}^{i}$ 
but since $U_j$ and $V_j$ are affine then the projective limits become stable after  $n(j)\geq 0$ steps. Take $n:=max_{j\in J}\{n(j)\}$: this is the number of steps after which we can stop.\end{proof}

\begin{lem}\label{lemmaNerBlUpPrelim} Let $Y$ be a scheme faithfully flat and of finite type over $S$, $C_2$  a closed subscheme of $Y_s$ and $C_1$ a closed subscheme of $C_2$. Denote by $Y^{C_i}$ the N\'eron blowing up of $Y$ in $C_i$ ($i=1,2$). Let $C':=Y^{C_2}\times_{C_2}C_1$ the induced closed subscheme of ${(Y^{C_2})}_s$ then $Y^{C_1}\simeq {(Y^{C_2})}^{C'}$.
\end{lem}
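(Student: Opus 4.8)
The plan is to reduce to the affine case and then to a direct computation with the explicit description of the N\'eron blowing up. Since the N\'eron blowing up commutes with localization on the base scheme (and with restriction to affine opens of $Y$, compatibly with the induced closed subscheme structures on the $C_i$), both sides of the claimed isomorphism $Y^{C_1}\simeq (Y^{C_2})^{C'}$ can be checked Zariski-locally on $Y$. So I would first assume $Y=\mathrm{Spec}(A)$ with $A$ a flat, finitely generated $R$-algebra, let $\pi$ be a uniformizer of $R$, and write $I_2\subseteq I_1\subseteq A$ for the ideals cutting out $C_2\subseteq C_1$ inside $Y_s=\mathrm{Spec}(A/\pi A)$; thus $I_j\supseteq \pi A$ and $I_1\supseteq I_2$.

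Next I would recall the explicit model: the N\'eron blowing up $Y^{C_j}=\mathrm{Spec}(A_j)$ is obtained by taking the ordinary blowing up of $\mathrm{Spec}(A)$ along the ideal $(I_j)$, restricting to the chart where the exceptional ideal is generated by $\pi$, and then taking the flat closure over $R$; concretely $A_j$ is the $\pi$-torsion-free quotient of the subalgebra of $A[1/\pi]$ generated by $A$ and $\{a/\pi : a\in I_j\}$. The key step is then purely algebraic: show that performing this construction for $I_1$ in one step yields the same $R$-algebra as performing it for $I_2$ first and then for the ideal $C'=I_1\cdot A_2$ (the image of $I_1$ in $A_2$, which by construction contains $\pi A_2$ and so defines a closed subscheme of $(Y^{C_2})_s$). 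Concretely: in $A_2$ every element of $I_2$ is divisible by $\pi$, so inverting $\pi$ and adjoining the elements $a/\pi$ for $a\in I_1$ to $A_2$ produces, inside $A[1/\pi]$, exactly the subalgebra generated by $A$ and all $a/\pi$ with $a\in I_1$ (since $I_2\subseteq I_1$ the generators coming from $I_2$ are already among those coming from $I_1$); taking $\pi$-torsion-free quotients on both sides then gives $A_1\simeq (A_2)^{C'}$.

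The cleanest way to package the algebraic step, and the way I would actually write it, is via the universal property of the N\'eron blowing up (as in \cite{BLR}, \S 3.2 or \cite{ANA}, \S 2.1): $Y^{C}$ is the final object among flat $S$-schemes $Z\to Y$ whose special fiber $Z_s\to Y_s$ factors through $C$. So I would verify that $(Y^{C_2})^{C'}$ has this universal property with respect to $C_1$: it is flat over $S$; its structure map to $Y$ factors through $Y^{C_2}$ and hence its special fiber lands in $C'$, which maps into $C_1$ under $(Y^{C_2})_s\to Y_s$ (using $C'=Y^{C_2}\times_{C_2}C_1$ and $C_1\subseteq C_2$); and conversely any flat $Z\to Y$ with $Z_s\to Y_s$ factoring through $C_1\subseteq C_2$ factors uniquely through $Y^{C_2}$ by the universal property of $Y^{C_2}$, and then the induced $Z_s\to (Y^{C_2})_s$ factors through $C'$ by the fibre-product definition of $C'$, so $Z\to Y^{C_2}$ factors uniquely through $(Y^{C_2})^{C'}$. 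Uniqueness of a final object then forces $(Y^{C_2})^{C'}\simeq Y^{C_1}$. I expect the main obstacle to be bookkeeping: making sure $C'$ really is a closed subscheme of $(Y^{C_2})_s$ (i.e. that the map $Y^{C_2}\times_{C_2}C_1\to (Y^{C_2})_s$ is a closed immersion) and that the scheme-theoretic factorizations are compatible with the flat-closure operation; once the universal property is set up correctly these are formal, and no residue-characteristic hypothesis is needed.
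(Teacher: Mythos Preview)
Your proposal is correct, and the approach you ultimately advocate---checking that $(Y^{C_2})^{C'}$ satisfies the universal property of the N\'eron blowing up at $C_1$---is exactly the paper's proof, which consists of a single sentence invoking that universal property together with the evident commutative diagram. Your write-up is in fact considerably more detailed than the paper's: you spell out both directions of the universal-property verification and flag the bookkeeping point that $C'$ really sits inside $(Y^{C_2})_s$, whereas the paper leaves all of this to the reader. The preliminary affine computation you sketch is an alternative route the paper does not take; it is not needed once the universal-property argument is in place, but it does no harm as motivation. One small slip: in your affine paragraph you write ``cutting out $C_2\subseteq C_1$'' where you mean $C_1\subseteq C_2$ (the ideal containment $I_2\subseteq I_1$ you wrote is the correct one).
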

\begin{proof}This follows directly from the universal property of the N\'eron blowing up and the following diagram:
$$\xymatrix{ {(Y^{C_1})}_s \ar@{-->}[rr]\ar[dr]& & C' \ar[ld]\ar@{^{(}->}[rr] & & {(Y^{C_2})}_s \ar[ld]\\ & C_1 \ar@{^{(}->}[rr]\ar@{^{(}->}[rd]& & C_2\ar@{^{(}->}[ld] & \\ & & Y_s & &  }$$
\end{proof}

\begin{lem}\label{lemmaNerBlUp} Let $Y$ be an integral fibered surface. Let $f:Y\to X$ be a finite and flat morphism, $C$ a closed subscheme of $Y_s$ and $Y^C$ the N\'eron blowing up of $Y$ in $C$. Assume that the canonical morphism $h:Y^C\to Y$ is a finite model map. Then there exist a regular fibered surface $X'$ and a finite model map  $X'\to X$ such that  $Y^C\simeq Y\times_X X'$.
\end{lem}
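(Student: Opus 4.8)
\emph{Proposal.} The plan is to obtain $X'$ as a N\'eron blowing up of $X$ along a suitably chosen closed subscheme $D$ of its special fibre $X_s$ (possibly followed by a desingularization), and then to recover the identification $Y\times_X X'\simeq Y^C$ from the fact that N\'eron blowing ups commute with flat base change.

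First I would define the centre downstairs. Since $f$ is finite, so is $f_s:Y_s\to X_s$, hence the scheme theoretic image $f_s(C)$ is a closed subscheme of $X_s$ (\cite{EGAI}, \S 9.5); more useful is to take for $D$ the closed subscheme of $X_s$ cut out by the ideal $(f_s^{\#})^{-1}(\mathcal{I}_C)$, so that $D$ is the largest closed subscheme of $X_s$ whose scheme theoretic preimage is contained in $C$. Set $\widetilde X:=X^{D}$, the N\'eron blowing up of $X$ along $D$. By construction the canonical morphism $\widetilde X\to X$ is a model map. I would then check that it is also \emph{finite}: locally $\mathcal{O}_{\widetilde X}$ is generated over $\mathcal{O}_X$ by $\mathcal{I}_D/\pi$, the corresponding $\mathcal{O}_Y$-algebra $\mathcal{O}_Y\otimes_{\mathcal{O}_X}\mathcal{O}_{\widetilde X}=\mathcal{O}_Y[\mathcal{I}_D\mathcal{O}_Y/\pi]$ is contained in $\mathcal{O}_Y[\mathcal{I}_C/\pi]=h_*\mathcal{O}_{Y^C}$, which is finite over $\mathcal{O}_Y$ by hypothesis, hence finite over $\mathcal{O}_X$; since $f$ is faithfully flat and finite, finiteness of $\mathcal{O}_X[\mathcal{I}_D/\pi]$ over $\mathcal{O}_X$ then follows by faithfully flat descent. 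In particular $\widetilde X\to X$ is affine and finite, so $\widetilde X$ is again an integral projective $S$-scheme of relative dimension one, i.e.\ a fibered surface; and $\widetilde X\to X$ is the desired finite model map (up to the regularity issue addressed below).

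Next, because $f$ is flat and the N\'eron blowing up is compatible with flat base change (\cite{BLR}, \S 3.2; \cite{WW2}, \S 1), one gets $Y\times_X\widetilde X\simeq Y^{f^{-1}(D)}$, where $f^{-1}(D)$ denotes the scheme theoretic preimage of $D$, a closed subscheme of $Y_s$ contained in $C$. The crucial step — and the one I expect to be the main obstacle — is to prove that $Y^{f^{-1}(D)}\simeq Y^{C}$ as $Y$-schemes, equivalently that $\mathcal{O}_Y[\mathcal{I}_{f^{-1}(D)}/\pi]=\mathcal{O}_Y[\mathcal{I}_C/\pi]$ inside $\mathcal{O}_Y\otimes_R K$. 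In general $f^{-1}(D)\subsetneq C$ and these two dilatations do not coincide, so at this point one must genuinely use \emph{both} the finiteness of $h$ and the integrality of $Y$: the idea would be that finiteness of $h$ forces the $\mathcal{O}_Y$-algebra $\mathcal{O}_Y[\mathcal{I}_C/\pi]$ (which a priori records the ideal $\mathcal{I}_C$ only through the fractions it generates) to be already extended from the base $X$, so that passing from $C$ to $f^{-1}(f_s(C))=f^{-1}(D)$ does not change the resulting blowing up. This descent of the algebra $h_*\mathcal O_{Y^C}$ along the finite faithfully flat map $f$ is the technical heart of the argument.

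Finally I would deal with regularity of $X'$. If $\widetilde X$ is already regular we are done with $X':=\widetilde X$. Otherwise one desingularizes: by Lemma \ref{lemmaStandard} the normalization (or a resolution) of $\widetilde X$ is dominated by $\widetilde X$ through a composite of finitely many N\'eron blowing ups, and using Lemma \ref{lemmaNerBlUpPrelim} these may be reorganized as blowing ups along nested subschemes of the successive special fibres. Performing, at each stage, the flat base change along the pullback of $Y$ keeps the identification $Y\times_X(\;\cdot\;)\simeq(\text{a dilatation of }Y)$, and one has to argue that this process stabilizes at a \emph{regular} $X'$ while the pulled back dilatation is still exactly $Y^C$ — using that $Y^C$ is itself an integral fibered surface and that its own structure already encodes all the blowing ups that are being made downstairs. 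The bookkeeping needed to guarantee that the modification carried out on $X$ pulls back to $Y^C$ on the nose, and not to a strictly larger or smaller dilatation of $Y$, is precisely what the hypothesis ``$h$ is a \emph{finite} model map'' is there to control.
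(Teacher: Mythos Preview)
Your one-shot approach has a genuine gap at the point you yourself flag as ``the main obstacle'': you never establish $Y^{f^{-1}(D)}\simeq Y^C$, and in fact there is no reason for a single dilatation downstairs to produce $Y^C$ on the nose. (There is also an internal slip: with $\mathcal I_D=(f_s^{\#})^{-1}(\mathcal I_C)$ one has $\mathcal I_D\mathcal O_{Y_s}\subseteq\mathcal I_C$, hence $C\subseteq f^{-1}(D)$, not $f^{-1}(D)\subseteq C$ as you write; your algebra inclusion $\mathcal O_Y[\mathcal I_D\mathcal O_Y/\pi]\subseteq\mathcal O_Y[\mathcal I_C/\pi]$ already uses the correct direction.) The vague descent argument you sketch for the algebra $h_*\mathcal O_{Y^C}$ is not a proof, and the subsequent desingularization paragraph compounds the difficulty rather than resolving it.

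The paper avoids this obstacle by not aiming for $Y^C$ in one step. It takes $D_1:=f_s(C)$ (the scheme theoretic image), sets $C_1:=D_1\times_{X_s}Y_s$, so that $C\hookrightarrow C_1$, and observes that either $C=C_1$ (in which case $Y^C\simeq X^{D_1}\times_X Y$ and we are done) or else $Y^C\to Y$ factors through $Y_1:=Y^{C_1}=X^{D_1}\times_X Y$. In the latter case Lemma \ref{lemmaNerBlUpPrelim} gives $Y^C\simeq Y_1^{C_1'}$ for a suitable $C_1'\subset (Y_1)_s$, and one is back in the same situation with $f_1:Y_1\to X_1:=X^{D_1}$. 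Iterating yields towers $X=X_0\leftarrow X_1\leftarrow\cdots$ and $Y=Y_0\leftarrow Y_1\leftarrow\cdots$ with $Y_i=Y\times_X X_i$ and $Y^C\simeq\varprojlim_i Y_i$. Only now is the finiteness of $h$ invoked, and only to show termination: reducing to the affine case, the chain of subrings $A_0\subseteq A_1\subseteq\cdots\subseteq B$ inside the finite $A_0$-module $B=\Gamma(Y^C,\mathcal O)$ must stabilize, so $Y^C\simeq Y_n$ and one takes $X':=X_n$. Thus finiteness of $h$ is a Noetherian-type termination input, not a descent input; this is exactly the trick of Lemma \ref{lemmaStandard}, and it is what makes the argument go through.
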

\begin{proof} Let $f_s:Y_s\to X_s$ be the special fiber of $f$ and $D_1:=f_s(C)$ the scheme theoretic image of $C$: it is a closed subscheme of $X_s$. Now consider the fiber product $C_1:=D_1\times_{X_s}Y_s$ and the natural closed immersion $C\hookrightarrow C_1$: if it is an isomorphism then, by the universal propertiy of the N\'eron blowing up, $Y^C\simeq X^{D_1}\times_X Y$ hence $X':=X^{D_1}$ is the required solution. Otherwise let $Y_1:=Y^{C_1}$, $X_1:=X^{D_1}$ and $f_1:Y_1\to X_1$ the pull back of $f$ over $X_1\to X$. The morphism $Y^C\to Y$ now factors through $Y_1$; then we analyze the morphism $Y^C\to Y_1$: by lemma \ref{lemmaNerBlUpPrelim} $Y^C\simeq Y_1^{C_1'}$ where $C_1':=C\times_{C_1}{(Y_1)}_s$, thus we are in the same situation as before: let $D_2:={(f_1)}_s(C_1')$, $C_2:=D_2\times_{{(X_1)}_s}{(Y_1)}_s$, $Y_2:={Y_1}^{C_2}$, $X_2:=X_1^{D_2}$, $C_2':=C_1'\times_{C_2}{(Y_2)}_s$ and so on. We finally obtain the isomorphism $Y^C\simeq \underleftarrow{lim}_i Y_i$ (where $Y_0:=Y$).
Now using arguments similar to those used in the last part of the proof of lemma \ref{lemmaStandard}  we are reduced to study the case where $X$ (then also $Y$ and $Y^C$) is affine: so let us set $Y_i:=Spec(A_i)$ and $Y^C=Spec(B)$ then since every $A_i$ is integral the  morphisms $Y_i\to Y_{i-1}$ induce a sequence of inclusions $$A_0\subseteq A_1\subseteq A_2 \subseteq .. \subseteq A_i \subseteq .. \subseteq B;$$
since $B$, as a $A_0$-module, is finite then it is generated by a finite number of elements $\{b_j\}_j\subset B$ so there exists an integer $n\geq 0$ such that $\{b_j\}_j\subset A_n$. Hence  $Y^C\simeq Y_n$ and $X':=X_n$ allows us to conclude.
\end{proof}

\begin{rem}In lemma \ref{lemmaNerBlUp} we never use the assumption that the absolute dimension of $X$ is $2$, but it is the only case of interest in this paper. The assumption that the residue field is algebraically closed will be used in lemma \ref{lemmaScoppio} and not before.
\end{rem}

\begin{lem}\label{lemmaFiniteModelMap} Let $f:Y\to X$ be a finite and flat morphism with $Y$ integral. Let $h:\widetilde{Y}\to Y$ be a finite model map. Then there exist a regular fibered surface $\widetilde{X}$ and a model map $\widetilde{X}\to X$ such that  $\widetilde{Y}\simeq Y\times_X \widetilde{X}$. Moreover $\widetilde{X}\to X$ is isomorphic to a composite of a finite number of N\'eron blowing ups.
\end{lem}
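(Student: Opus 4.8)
The plan is to reduce the statement to Lemma \ref{lemmaNerBlUp} by factoring the arbitrary finite model map $h:\widetilde{Y}\to Y$ through a finite sequence of N\'eron blowing ups, and then descend each blowing up to the base $X$ one step at a time. First I would apply Lemma \ref{lemmaStandard} to $h:\widetilde{Y}\to Y$: since $h$ is an affine (indeed finite) model map between schemes faithfully flat and of finite type over $S$, that lemma tells us $h$ is isomorphic to a composite of finitely many N\'eron blowing ups, say
$$\widetilde{Y}\simeq Y_n\to Y_{n-1}\to\cdots\to Y_1\to Y_0=Y,$$
where each $Y_{i}\to Y_{i-1}$ is the N\'eron blowing up of $Y_{i-1}$ at a closed subscheme $C_i$ of $(Y_{i-1})_s$. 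Because $h$ is finite and each intermediate morphism $Y_n\to Y_i$ is a model map, each $Y_i\to Y_{i-1}$ is itself a finite model map (a model map dominated by a finite model map is finite, as its coordinate rings sit between $A_0$ and the finite $A_0$-algebra $B$, exactly as in the last paragraph of the proof of Lemma \ref{lemmaNerBlUp}).

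Next I would argue by induction on $n$. For $n=1$ this is precisely Lemma \ref{lemmaNerBlUp}: $Y_1=Y^{C_1}\to Y$ is a finite model map, so there is a regular fibered surface $X_1$ and a finite model map $X_1\to X$ (a single N\'eron blowing up of $X$) with $Y_1\simeq Y\times_X X_1$. For the inductive step, suppose we have produced a regular fibered surface $X_{i-1}$ with a finite model map $X_{i-1}\to X$, obtained as a composite of N\'eron blowing ups, and with $Y_{i-1}\simeq Y\times_X X_{i-1}$. Now $Y_{i-1}$ is a finite flat $X_{i-1}$-scheme (base change of $Y\to X$), it is integral since $\widetilde{Y}$ dominates it and $\widetilde{Y}$ is integral (being flat over $S$ with integral generic fibre $\widetilde{Y}_\eta\simeq Y_\eta$, using \cite{EGAI} Proposition 9.5.9), and $Y_i=Y_{i-1}^{C_i}\to Y_{i-1}$ is a finite model map. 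Applying Lemma \ref{lemmaNerBlUp} to the finite flat morphism $Y_{i-1}\to X_{i-1}$ and the blowing up $Y_{i-1}^{C_i}$ yields a regular fibered surface $X_i$ and a finite model map $X_i\to X_{i-1}$ (a single N\'eron blowing up of $X_{i-1}$) with $Y_i\simeq Y_{i-1}\times_{X_{i-1}}X_i\simeq Y\times_X X_i$. Composing $X_i\to X_{i-1}\to X$ keeps $X_i\to X$ a composite of finitely many N\'eron blowing ups, hence a model map. After $n$ steps we set $\widetilde{X}:=X_n$; then $\widetilde{Y}\simeq Y_n\simeq Y\times_X X_n=Y\times_X\widetilde{X}$, and $\widetilde{X}\to X$ is a composite of finitely many N\'eron blowing ups, as required.

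The main obstacle is the bookkeeping needed to guarantee, at each stage, that the hypotheses of Lemma \ref{lemmaNerBlUp} genuinely apply — in particular that $Y_{i-1}$ remains integral and finite flat over the new base $X_{i-1}$, and that the blowing up $Y_{i-1}\to Y_{i-2}$ identified by Lemma \ref{lemmaStandard} is compatible (via Lemma \ref{lemmaNerBlUpPrelim}) with regarding $Y_{i-1}$ as $Y\times_X X_{i-1}$ rather than as an abstract term of the original tower. The compatibility is where Lemma \ref{lemmaNerBlUpPrelim} does its work: successive N\'eron blowing ups of $Y$ are again N\'eron blowing ups along induced subschemes, so the tower over $Y$ coming from Lemma \ref{lemmaStandard} can be matched step by step with the tower obtained by pulling back along $X_i\to X$. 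Once this is set up carefully the induction runs cleanly, and finiteness throughout is controlled exactly by the finite-generation argument already used in the proof of Lemma \ref{lemmaNerBlUp}.
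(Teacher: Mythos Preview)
Your proposal is correct and follows exactly the paper's approach: the paper's own proof is the single line ``This is just a consequence of lemmas \ref{lemmaStandard} and \ref{lemmaNerBlUp},'' and you have simply spelled out the obvious induction that combines them. One small slip worth fixing: Lemma \ref{lemmaNerBlUp} does not in general produce $X_i$ as a \emph{single} N\'eron blowing up of $X_{i-1}$ but rather as a finite composite of such (look at its proof), so drop that parenthetical---it does not affect your conclusion, and the appeal to Lemma \ref{lemmaNerBlUpPrelim} in your final paragraph is likewise unnecessary, since at each stage you apply Lemma \ref{lemmaNerBlUp} directly to $Y_{i-1}\to X_{i-1}$.
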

\begin{proof}This is just a consequence of lemmas \ref{lemmaStandard} and  \ref{lemmaNerBlUp}.
\end{proof}

\begin{cor}\label{corNormalizzo} Let $f:Y\to X$ be a finite and flat morphism with $Y$ integral. Let $h:\widetilde{Y}\to Y$ be the normalization morphism. Assume that the generic fiber $Y_{\eta}$ of $Y$ is smooth and geometrically integral. Then there exist a regular fibered surface $\widetilde{X}$ and a model map $\widetilde{X}\to X$ such that  $\widetilde{Y}\simeq Y\times_X \widetilde{X}$.
\end{cor}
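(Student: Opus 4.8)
The plan is to reduce the statement immediately to Lemma \ref{lemmaFiniteModelMap}: the hypotheses on $f$ (finite, flat, with $Y$ integral) are exactly those required there, so the only thing left to establish is that the normalization morphism $h\colon\widetilde{Y}\to Y$ is a \emph{finite model map}.

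First I would check finiteness. By Notation \ref{notaSolva} the base $S=\operatorname{Spec}(R)$ is the spectrum of a complete discrete valuation ring, hence excellent and in particular a Nagata scheme; since $Y$ is of finite type over $S$ it is itself excellent, and therefore its normalization $\widetilde{Y}$ is finite over $Y$. Moreover $\widetilde{Y}$ is integral (being the normalization of the integral scheme $Y$) and it is faithfully flat and of finite type over $S$: finite type is clear from finiteness of $h$, and flatness holds because $\widetilde{Y}$ is integral with generic point lying over $\eta$ — indeed $Y_{\eta}\neq\emptyset$, $Y$ being flat over the discrete valuation ring $R$, and flatness over a discrete valuation ring is equivalent, for an integral scheme, to dominance of the structure morphism. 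In particular $\widetilde{Y}$ is again a fibered surface over $S$.

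Next I would verify that $h$ is a model map, i.e. an isomorphism on generic fibres. The generic fibre $Y_{\eta}=Y\times_S\eta$ is obtained from $Y$ by inverting a uniformizer of $R$, and the formation of the normalization commutes with such a localization; hence $\widetilde{Y}\times_S\eta$ is canonically the normalization of $Y_{\eta}$. But $Y_{\eta}$ is smooth over $K$ by hypothesis, hence regular, hence normal, so it coincides with its own normalization. Therefore $h_{\eta}\colon\widetilde{Y}\times_S\eta\to Y_{\eta}$ is an isomorphism and $h$ is a finite model map.

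With these verifications in hand I would simply apply Lemma \ref{lemmaFiniteModelMap} to $f\colon Y\to X$ and $h\colon\widetilde{Y}\to Y$, obtaining a regular fibered surface $\widetilde{X}$ and a model map $\widetilde{X}\to X$ (in fact a finite composite of N\'eron blowing ups) with $\widetilde{Y}\simeq Y\times_X\widetilde{X}$, which is precisely the claim. There is no serious obstacle here: the whole content of the corollary lies in the two points isolated above — that the normalization of $Y$ is finite (where excellence of $R$ is used) and that it is an isomorphism over $\eta$ (where the normality of the smooth generic fibre $Y_{\eta}$ is used) — after which the preceding lemmas do all the work.
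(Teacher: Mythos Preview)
Your proof is correct and follows exactly the paper's approach: reduce to Lemma \ref{lemmaFiniteModelMap} after checking that the normalization $h$ is a finite model map. The only difference is cosmetic---the paper cites \cite{Liu}, Ch.~8, Lemma 3.49 for this fact, whereas you spell out the argument (excellence of $R$ for finiteness, smoothness of $Y_\eta$ for the generic isomorphism).
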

\begin{proof}In this context the normalization morphism $h:\widetilde{Y}\to Y$  is a finite (then affine) model map (\cite{Liu}, Ch 8, Lemma 3.49). Then the result is just a consequence of lemma \ref{lemmaFiniteModelMap}.
\end{proof}

\begin{lem}\label{lemmaScoppio} Let $G$ be a finite and flat $S$-group scheme with infinitesimal special fiber $G_s$ and $f:Y\to X$ a $G$-torsor. Assume that the generic fiber $Y_{\eta}$ of $Y$ is smooth and geometrically integral. Let moreover  $\widetilde{Y}\to Y$ be the blowing-up of $Y$ centered at a point $q$ of the special fiber $Y_s$ of $Y$. Then $\widetilde{Y}\simeq Y\times_X \widetilde{X}$ where $\widetilde{X}\to X$ is the blowing up centered at $p:=f_s(q)$.
\end{lem}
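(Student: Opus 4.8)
The plan is to deduce the statement from two standard facts: blowing up commutes with flat base change, and a torsor under a group scheme whose special fibre is infinitesimal is radicial along that special fibre.

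First I would fix the local picture around $p$ and $q$. Since $Y_{\eta}$ is integral, $Y$ is an integral fibered surface (Remark following Notation \ref{notaSolva}), and $f:Y\to X$ is finite and faithfully flat, being a $G$-torsor; its special fibre $f_s:Y_s\to X_s$ is a $G_s$-torsor. Because $G_s$ is infinitesimal it is radicial over $k$, and a torsor is fppf-locally trivial, so after an fppf base change $f_s$ becomes the projection $Y_s\times_k G_s\to Y_s$, which is radicial; by fppf descent $f_s$ itself is radicial. In particular the scheme-theoretic fibre $f_s^{-1}(p)=Y\times_X Spec\,k(p)$ is supported at a single point, namely $q$; and as $k$ is algebraically closed we have $k(q)=k(p)=k$, so $q$ is a rational point of $Y_s$. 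This is the only use of the hypothesis on the residue field, as announced in the remark preceding this lemma.

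Next let $\widetilde{X}\to X$ be the blowing up of $X$ centered at $p$, i.e.\ along the (reduced) ideal sheaf $\mathcal{I}_p\subset\mathcal{O}_X$ of $p$. Since $f$ is flat, blowing up commutes with base change along $f$ (see e.g.\ \cite{Liu}, Ch.\ 8, \S\,1; alternatively, by flatness the Rees algebra of $\mathcal{I}_p\mathcal{O}_Y$ is $\bigoplus_{n\geq 0}\mathcal{I}_p^{\,n}\otimes_{\mathcal{O}_X}\mathcal{O}_Y$), hence
$$Y\times_X\widetilde{X}\;\simeq\;\big(\text{blowing up of }Y\text{ along }\mathcal{I}_p\mathcal{O}_Y\big).$$
Now $\mathcal{I}_p\mathcal{O}_Y$ is exactly the ideal sheaf of the closed subscheme $f_s^{-1}(p)\subset Y$, which by the previous paragraph is supported only at $q$; therefore the right-hand side is an isomorphism onto $Y$ over $Y\setminus\{q\}$, i.e.\ it is the blowing up of $Y$ centered at the point $q$. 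This yields $\widetilde{Y}\simeq Y\times_X\widetilde{X}$, as claimed.

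The delicate point is precisely this last identification of centers. Pulling back the point-blowup of $X$ produces, a priori, the blowing up of $Y$ along the fibre $f_s^{-1}(p)$, which in general is not reduced; what rescues the argument is that $G_s$ is infinitesimal, so this fibre is an artinian scheme supported at the single point $q$ and the resulting modification of $Y$ is concentrated over $q$. (Had $G_s$ possessed a nontrivial \'etale part, $f_s^{-1}(p)$ would meet several points of $Y_s$ and the statement would fail; this is why infinitesimality is assumed.)
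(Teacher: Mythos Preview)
Your overall strategy --- flat base change for blow-ups followed by an identification of the pulled-back center with $q$ --- is exactly the route taken in the paper. There one observes that $p\times_X Y\simeq G_s$, that the closed immersion $q\hookrightarrow G_s$ identifies $q$ with $(G_s)_{\mathrm{red}}$, and then asserts that the blow-up of $Y$ centered at $p\times_X Y$ coincides with the blow-up centered at the reduced point $q$; for this step the paper invokes \cite{Liu}, Ch.~2, ex.~3.11(a).

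The gap in your argument lies precisely at this last inference, which you try to justify on your own. From the fact that $f_s^{-1}(p)$ is \emph{supported} at $q$ you conclude that blowing up $Y$ along $f_s^{-1}(p)$ yields ``the blowing up of $Y$ centered at the point $q$''. But two ideals with the same radical need not produce isomorphic blow-ups. Concretely, take $R=k[[s]]$ with $\mathrm{char}(k)=p$, $X=\mathrm{Spec}\,R[t]$, $Y=\mathrm{Spec}\,R[u]$, and $f^{\sharp}\colon t\mapsto u^{p}$ (an $\alpha_{p}$-torsor with infinitesimal special fibre), so that $q=(s,u)$ and $f_s(q)=(s,t)$. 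Then $\mathcal{I}_{f_s(q)}\mathcal{O}_Y=(s,u^{p})$, and on the chart of $\mathrm{Bl}_{(s,u^{p})}(Y)$ where $s$ generates one gets $\mathrm{Spec}\,R[u,w]/(sw-u^{p})$, which is singular at the origin, whereas $\mathrm{Bl}_{(s,u)}(Y)$ is regular. So ``the resulting modification is concentrated over $q$'' does not determine \emph{which} modification one obtains. You have correctly isolated the delicate point in your final paragraph, but the rescue you propose (artinian center with a single support point) is purely set-theoretic and does not close the gap; a genuine scheme-theoretic comparison of the two centers is required. Look up the reference the paper cites at this step and check carefully what it actually delivers in the present situation.
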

\begin{proof} The residue field $k$ being algebraically closed then $q:Spec(k)\to  Y$ and also $p:Spec(k)\to X$ (\cite{Liu}, Ch.2 ex. 5.9). Thus, since $f_s:Y_s\to X_s$ is a $G_s$-torsor, $p\times_XY\simeq G_s$ and the canonical closed immersion $q\to G_s$ identifies $q$ with $(G_s)_{\text{red}}$ (recall that $G_s$ is infinitesimal). Then the blowing up $Y'$ of $Y$ centered at $p\times_XY$ is isomorphic to the blowing up of $Y$ centered at $q$ (\cite{Liu}, Ch. 2, ex. 3.11 (a)). But since $Y'\simeq Y\times_X \widetilde{X}$ (\cite{Liu}, Ch. 8, Proposition 1.12 (c)) then  $\widetilde{Y}\simeq Y\times_X \widetilde{X}$, as required.
\end{proof}

\begin{rem}\label{remDesing}
Let $Y$ be any fibered surface over $S$ with smooth generic fiber $Y_{\eta}$: the canonical desingularization of  $Y$ is the sequence of blowing ups
\begin{equation}\label{eqLipman} .. \to Y_i\to Y_{i-1}\to Y_{i-2} \to .. \to Y_1\to Y_0=Y \end{equation}
where for each $i$, the morphism $Y_i\to Y_{i-1}$ denotes 
\begin{itemize}
	\item the normalization morphism if $i$ is odd (it can eventually be an isomorphism if $Y_{i-1}$ is already normal);
	\item the blowing up at the singular points of $Y_{i-1}$ if $i$ is even. 
\end{itemize}
Recall that at each step, when $i$ is even, the set $Sing(Y_{i-1})$ of singular points of $Y_{i-1}$ is a finite set of points contained in the special fiber ${(Y_{i-1})}_s$. According to \cite{Liu}, Ch.8, Corollary 3.51, there exists an integer $n\geq 0$ such that $\widetilde{Y}:=Y_n$ is regular and the morphism $\widetilde{Y}\to Y$ is a model map.
\end{rem}

\begin{prop}\label{propScoppio} Let $G$ be a finite and flat $S$-group scheme with infinitesimal special fiber $G_s$ and $f:Y\to X$ a $G$-torsor. Assume that the generic fiber $Y_{\eta}$ of $Y$ is smooth and geometrically integral. Let moreover  $\widetilde{Y}\to Y$ be the canonical desingularization of  $Y$. Then there exist a regular fibered surface  $\widetilde{X}$ and a morphism $\widetilde{X}\to X$ such that $\widetilde{Y}\simeq Y\times_X \widetilde{X}$. In particular $\widetilde{Y}\to \widetilde{X}$ is a $G$-torsor.
\end{prop}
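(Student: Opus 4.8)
The plan is to run an induction along the canonical desingularization sequence
$$\cdots\to Y_i\to Y_{i-1}\to\cdots\to Y_1\to Y_0=Y$$
of Remark~\ref{remDesing}, propagating at each stage a regular model $X_i$ of $X$ together with a $G$-torsor structure on $Y_i\to X_i$, and invoking Corollary~\ref{corNormalizzo} at the odd (normalization) steps and Lemma~\ref{lemmaScoppio} at the even (blowing up) steps.

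More precisely, one would set $X_0:=X$ and assume inductively that for the $(i-1)$-st term of the sequence there has been produced a regular fibered surface $X_{i-1}$, a morphism $X_{i-1}\to X$, and an isomorphism $Y_{i-1}\simeq Y\times_X X_{i-1}$ under which $Y_{i-1}\to X_{i-1}$ is a $G$-torsor. Note that $Y_{i-1}$ is integral ($Y_0=Y$ is, since $Y_\eta$ is, and both normalization and blowing up along a proper closed subscheme preserve integrality) and that its generic fibre is isomorphic to $Y_\eta$, hence smooth and geometrically integral, because all the transition morphisms are model maps. If $i$ is odd, $Y_i\to Y_{i-1}$ is the normalization; since $Y_{i-1}\to X_{i-1}$ is finite and flat, Corollary~\ref{corNormalizzo} produces a regular fibered surface $X_i$, a model map $X_i\to X_{i-1}$ and an isomorphism $Y_i\simeq Y_{i-1}\times_{X_{i-1}}X_i\simeq Y\times_X X_i$, and $Y_i\to X_i$ is again a $G$-torsor as the base change of one. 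If $i$ is even, $Y_i\to Y_{i-1}$ is the blowing up of $Y_{i-1}$ at its (finitely many, by Remark~\ref{remDesing}) singular points $q_1,\dots,q_m\in (Y_{i-1})_s$; writing this as the successive blowing up at the single points $q_1,\dots,q_m$ (the remaining centres being untouched at each stage) and applying Lemma~\ref{lemmaScoppio} at each stage --- legitimate because $G_s$ is infinitesimal, the generic fibre at hand is smooth and geometrically integral, and $k$ is algebraically closed so that each centre and its image are $k$-rational closed points --- one obtains $X_i$ with a morphism $X_i\to X_{i-1}$ which is a composite of blowing ups centred at $k$-rational closed points, and with $Y_i\simeq Y_{i-1}\times_{X_{i-1}}X_i\simeq Y\times_X X_i$ a $G$-torsor over $X_i$. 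Since blowing up a regular $2$-dimensional scheme at a $k$-rational closed point (a regular centre) again yields a regular scheme, $X_i$ is again a regular fibered surface.

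By the finiteness result recalled in Remark~\ref{remDesing} (\cite{Liu}, Ch.~8, Corollary~3.51) the sequence stabilizes: $\widetilde Y=Y_n$ for some $n\geq 0$. Setting $\widetilde X:=X_n$ and composing $X_n\to X_{n-1}\to\cdots\to X$ gives the required regular fibered surface together with a morphism $\widetilde X\to X$ and an isomorphism $\widetilde Y\simeq Y\times_X\widetilde X$; the last assertion, that $\widetilde Y\to\widetilde X$ is a $G$-torsor, is then immediate since it is the base change of the $G$-torsor $Y\to X$ along $\widetilde X\to X$.

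The step demanding the most care --- and where the hypotheses are genuinely consumed --- is the even one: one must reduce the blowing up at the reduced finite set $\mathrm{Sing}(Y_{i-1})$ to successive blowing ups at individual closed points so that Lemma~\ref{lemmaScoppio} applies, verify that after each such blow-up the total space is still a $G$-torsor over the new base with smooth, geometrically integral generic fibre, and keep track of the images of the centres being $k$-rational closed points of a regular surface so that regularity of $X_i$ persists. The odd steps are essentially automatic once one knows that $Y_{i-1}$ stays integral and $Y_{i-1}\to X_{i-1}$ stays finite and flat, which is exactly what the torsor structure guarantees; the only genuine input there is Corollary~\ref{corNormalizzo}.
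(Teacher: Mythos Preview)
Your proposal is correct and follows exactly the route the paper takes: the paper's proof simply says that the canonical desingularization is a finite sequence of normalizations and blowing ups at singular points, and then invokes Corollary~\ref{corNormalizzo} for the former and Lemma~\ref{lemmaScoppio} for the latter. Your write-up merely unpacks this into an explicit induction and spells out the reduction of a multi-point blow-up to successive single-point blow-ups (needed because Lemma~\ref{lemmaScoppio} is stated for a single centre), together with the bookkeeping that keeps each $X_i$ regular; none of this departs from the paper's argument.
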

\begin{proof} According to previous discussion $\widetilde{Y}\to Y$ is a sequence of normalization morphisms (which are finite morphisms) and blowing ups centered at a finite set of singular points. Then in order to conclude it is sufficient to use lemma \ref{lemmaScoppio} and corollary \ref{corNormalizzo}.
\end{proof}

%

Before stating the main theorem of this paper we need a last lemma:

\begin{lem}\label{lemmaPRINCIPALE} Let $Z\to X$ be a finite $(\mathbb{Z}/p\mathbb{Z})_R$-torsor. Then there exist a finite and flat $R$-group scheme $G$ with infinitesimal special fiber, a $G$-torsor $Y\to X$ and a model map $\varphi:Z\to Y$ commuting with the actions of $(\mathbb{Z}/p\mathbb{Z})_R$ and $G$.
\end{lem}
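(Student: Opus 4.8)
The strategy is to produce a finite and flat $R$-group scheme $G$ of order $p$ with infinitesimal special fibre, together with a homomorphism of $R$-group schemes $\psi\colon (\mathbb{Z}/p\mathbb{Z})_R\to G$ whose generic fibre $\psi_\eta$ is an isomorphism, and then to set $Y:=Z\times^{(\mathbb{Z}/p\mathbb{Z})_R}G$, the contracted product taken along $\psi$ (\cite{DemGab}, III, \S 4, n$^{\circ}$ 3). Since the contracted product of a torsor along a group scheme homomorphism is again a torsor (loc. cit.), $Y\to X$ is a $G$-torsor, and $G$ has infinitesimal special fibre by construction. The canonical $X$-morphism $\varphi\colon Z\to Y$, $z\mapsto [z,e]$, satisfies $\varphi(z\cdot g)=\varphi(z)\cdot\psi(g)$, hence commutes with the two actions via $\psi$. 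Finally, the contracted product commutes with base change (loc. cit., 3.1) and contracting a torsor along an isomorphism gives back the torsor, so $\varphi_\eta\colon Z_\eta\to Z_\eta\times^{(\mathbb{Z}/p\mathbb{Z})_K,\psi_\eta}G_\eta$ is an isomorphism; thus $\varphi$ is a model map. Everything therefore reduces to producing the pair $(G,\psi)$, and the two possible cases $\operatorname{char}K=p$ and $\operatorname{char}K=0$ are treated separately.

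If $\operatorname{char}K=p$, fix a uniformizer $\pi\in R$ and let $G:=\ker\bigl(\mathbb{G}_{a,R}\xrightarrow{\,x\mapsto x^{p}-\pi^{p-1}x\,}\mathbb{G}_{a,R}\bigr)=\operatorname{Spec}R[x]/(x^{p}-\pi^{p-1}x)$, which is a finite flat $R$-group scheme of order $p$ for the additive law $m(x)=x\otimes 1+1\otimes x$ (a homomorphism since $\binom{p}{k}\equiv 0$ for $0<k<p$). Its special fibre is $\operatorname{Spec}k[x]/(x^{p})$, which is infinitesimal, whereas over $K$ the map $x\mapsto x^{p}-\pi^{p-1}x$ is separable, so $G_\eta$ is \'etale; using that the $(p-1)$-th roots of unity in $\overline{K}$ together with $0$ form $\mathbb{F}_p$, one checks that $\psi\colon (\mathbb{Z}/p\mathbb{Z})_R\to G$, $u\mapsto \pi u$, is a well defined homomorphism inducing an isomorphism $(\mathbb{Z}/p\mathbb{Z})_K\xrightarrow{\sim}G_\eta$. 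In particular no extension of scalars is needed in this case.

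If $\operatorname{char}K=0$, then, after a finite extension of scalars (which is harmless in all the applications of this lemma), we may assume that $K$ contains a primitive $p$-th root of unity $\zeta_p$, whence $\zeta_p\in R$ because $R$ is integrally closed. Take $G:=\mu_{p,R}$; its special fibre $\mu_{p,k}$ is infinitesimal since $k$ has characteristic $p$, and the homomorphism $\psi\colon (\mathbb{Z}/p\mathbb{Z})_R\to\mu_{p,R}$ determined by $1\mapsto\zeta_p$, an element of $\mu_p(R)=\operatorname{Hom}_{R\text{-gp}}((\mathbb{Z}/p\mathbb{Z})_R,\mu_{p,R})$, induces on generic fibres the standard isomorphism $(\mathbb{Z}/p\mathbb{Z})_K\xrightarrow{\sim}\mu_{p,K}$. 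In both cases $(G,\psi)$ has the required properties and the first paragraph concludes the argument.

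None of the steps is a genuine obstacle: the one point requiring attention is the choice of $G$ in each characteristic — in characteristic $p$ the constant group does not degenerate to $\mu_p$, so one is forced to use the Artin--Schreier-type group scheme above — together with the verification of its group axioms (immediate, because the underlying algebra is $R$-free and the axioms hold after applying $-\otimes_R K$); the formal properties of contracted products used throughout are already recorded in \cite{DemGab}.
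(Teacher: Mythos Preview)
Your argument is correct and follows exactly the paper's strategy: exhibit a model map $\rho\colon(\mathbb{Z}/p\mathbb{Z})_R\to G$ with $G_s$ infinitesimal, then push $Z$ forward via the contracted product $Y=Z\times^{(\mathbb{Z}/p\mathbb{Z})_R}G$. The only difference is cosmetic: where the paper simply cites \cite{Mau}, \S 3.2 (equal characteristic) and \cite{OOS}, I, \S 2 (mixed characteristic) for the existence of $(G,\rho)$, you write down explicit choices --- the Artin--Schreier congruence group scheme in characteristic $p$ (which is precisely the construction in \cite{Mau}) and $\mu_{p,R}$ in characteristic $0$. Your remark that a finite scalar extension to acquire $\zeta_p$ may be needed in the mixed-characteristic case is honest and, as you note, harmless: the lemma is invoked only inside Theorem~\ref{teoPRINCIPALE}, where scalar extensions are already permitted, and the Oort--Sekiguchi--Suwa construction cited by the paper likewise sits most naturally over a base containing $\zeta_p$.
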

\begin{proof} That a model map $\rho:(\mathbb{Z}/p\mathbb{Z})_R\to G$ such that $G_{s}$ is infinitesimal exists is clear from \cite{Mau}, \S 3.2 when $char(K)=p$ and from \cite{OOS}, I, \S 2, when $char(K)=0$, then the model map $\varphi:Z\to Y$ is given by the contracted product (through $\rho$) $Y=Z\times^{(\mathbb{Z}/p\mathbb{Z})_R} G$ .
\end{proof}

\begin{rem}The $G$-torsor $Y\to X$ obtained in lemma \ref{lemmaPRINCIPALE} has trivial special fiber but this will not affect the following discussion.
\end{rem}

\begin{teo}\label{teoPRINCIPALE} Let $X$ be a proper and smooth fibered surface over $R$ with geometrically connected fibers and provided with a section $x\in X(S)$.  Let $G$ be a finite, étale, solvable $K$-group scheme of order $p^n$ and $Y\to X_{\eta}$ a quotient $G$-torsor, pointed in $y\in Y_x(K)$. Then, after eventually a finite extension of scalars, there exist a regular fibered surface $\widetilde{X}$, a model map $\widetilde{X}\to X$, a finite flat and solvable $R$-group scheme $G'$ of order $p^n$ such that  $Y\to X_{\eta}$  can be extended to a $G'$-torsor  $Y'\to \widetilde{X}$. Moreover we can construct $Y'$ in such a way to make it regular.
\end{teo}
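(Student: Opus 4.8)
The plan is to argue by induction on $n$, after reducing $G$ --- at the cost of a preliminary finite extension of scalars --- to a tower of $(\mathbb{Z}/p\mathbb{Z})_K$-torsors. Since $G$ is \'etale of order $p^n$, enlarging $K$ I may assume $G$ is constant; then $G$, being a finite $p$-group, is nilpotent and hence admits a chief series $0=H_n\triangleleft H_{n-1}\triangleleft\cdots\triangleleft H_0=G$ with $H_i/H_{i+1}\cong(\mathbb{Z}/p\mathbb{Z})_K$ for all $i$. Setting $Y^{(i)}:=Y\times^G(G/H_i)$, the $G$-torsor $Y\to X_\eta$ becomes a tower of quotient, pointed torsors $Y=Y^{(n)}\to Y^{(n-1)}\to\cdots\to Y^{(0)}=X_\eta$, in which $Y^{(i+1)}\to Y^{(i)}$ is a torsor under $H_i/H_{i+1}\cong(\mathbb{Z}/p\mathbb{Z})_K$, again a quotient torsor (an easy exact-sequence argument for the fundamental group schemes of $X_\eta$, $Y^{(i)}$ and $G/H_i$). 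By Theorem \ref{teoReno} every $J_{Y^{(i)}}$ has potential abelian reduction; since there are finitely many $i$, one further finite extension of $K$ ensures that all the $J_{Y^{(i)}}$ have abelian reduction over $R$. None of these base changes disturbs the hypotheses on $X$, $x$, $Y$, and --- this is what keeps the argument running --- the generic fibers $Y^{(i)}$, hence the good reduction condition, are insensitive to any later modification of special fibers.

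I would then prove the following strengthened statement by induction on $i$: there are a regular fibered surface $X_{(i)}$ with a section, a model map $X_{(i)}\to X$, a finite flat solvable $R$-group scheme $G'_{(i)}$ of order $p^i$ with \emph{infinitesimal special fiber}, and a pointed $G'_{(i)}$-torsor $\mathcal{Y}_{(i)}\to X_{(i)}$, with $\mathcal{Y}_{(i)}$ a regular fibered surface, extending $Y^{(i)}\to X_\eta$. For $i=0$ one takes $X_{(0)}=\mathcal{Y}_{(0)}=X$. For the passage $i\to i+1$: $\mathcal{Y}_{(i)}$ is a regular fibered surface with section whose generic fiber $Y^{(i)}$ is smooth, geometrically integral and has Jacobian with abelian reduction, so Theorem \ref{teoExtAbel} extends the quotient commutative pointed torsor $Y^{(i+1)}\to Y^{(i)}$ to a $(\mathbb{Z}/p\mathbb{Z})_R$-torsor $W\to\mathcal{Y}_{(i)}$, and Lemma \ref{lemmaPRINCIPALE} then produces a finite flat $R$-group scheme $H_{i+1}$ with infinitesimal special fiber, an $H_{i+1}$-torsor $V\to\mathcal{Y}_{(i)}$, and a model map $W\to V$, so that $V\to\mathcal{Y}_{(i)}$ is a pointed extension of $Y^{(i+1)}\to Y^{(i)}$. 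Feeding the two extensions $\mathcal{Y}_{(i)}\to X_{(i)}$ and $V\to\mathcal{Y}_{(i)}$ into Theorem \ref{teoEstTorri} (with $G_1=G/H_i$ and $G_2=H_i/H_{i+1}$) yields a pointed $G'$-torsor $Z\to X_{(i)}$ extending $Y^{(i+1)}\to X_\eta$; by Remark \ref{remFinale} the morphism $Z\to\mathcal{Y}_{(i)}$ is an $H_{i+1}$-torsor and, by \eqref{sequenzaFpqc}, $G'$ sits in an exact sequence $0\to H_{i+1}\to G'\to G'_{(i)}\to 0$, so $G'$ is finite flat and solvable of order $p^{i+1}$ and --- an extension of infinitesimal group schemes over $k$ being infinitesimal --- has infinitesimal special fiber. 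Finally $Z$ need not be regular, but $Z_\eta=Y^{(i+1)}$ is smooth and geometrically integral and $G'$ has infinitesimal special fiber, so Proposition \ref{propScoppio} applied to $Z\to X_{(i)}$ furnishes a regular fibered surface $X_{(i+1)}$, a model map $X_{(i+1)}\to X_{(i)}$, and a $G'$-torsor $\mathcal{Y}_{(i+1)}:=Z\times_{X_{(i)}}X_{(i+1)}\to X_{(i+1)}$ which, being the canonical desingularization of $Z$, is regular; the section survives (being a regular curve it has a strict transform through every normalization and blow-up, and N\'eron blow-ups lift sections in any case). Taking $G'_{(i+1)}:=G'$ completes the induction, and for $i=n$ one reads off $(\widetilde X,Y',G'):=(X_{(n)},\mathcal{Y}_{(n)},G'_{(n)})$, with $Y'$ regular.

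The crux --- and the reason for carrying the two extra invariants, namely regularity of $\mathcal{Y}_{(i)}$ and infinitesimality of the special fiber of $G'_{(i)}$ --- is a genuine tension between the available tools. Extending a commutative layer (Theorem \ref{teoExtAbel}, via Theorem \ref{teoReno}) requires the base to be a regular fibered surface with good reduction Jacobian; but the total space of the resulting torsor is in general singular, and one cannot merely desingularize and iterate, because a desingularization of the total space of a torsor descends to a blow-up of the base \emph{and keeps the torsor structure} only when the structure group has infinitesimal special fiber (Proposition \ref{propScoppio}, resting on Lemma \ref{lemmaScoppio}, where the constancy of $(\mathbb{Z}/p\mathbb{Z})_R$ would be an obstruction). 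This is precisely why one first replaces $(\mathbb{Z}/p\mathbb{Z})_R$ by an infinitesimal-special-fiber model (Lemma \ref{lemmaPRINCIPALE}) and then --- the more delicate point --- must verify that this property is inherited by the group scheme $G'$ produced by the tower construction of Theorem \ref{teoEstTorri}, i.e.\ by the extension $0\to H_{i+1}\to G'\to G'_{(i)}\to 0$. Establishing that this invariant propagates, together with the routine bookkeeping on the section and on the properness of the successive blow-ups, is where the real content lies; the rest is an orchestration of Theorems \ref{teoReno}, \ref{teoExtAbel}, \ref{teoEstTorri} and Proposition \ref{propScoppio}.
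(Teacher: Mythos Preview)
Your argument is correct and follows the same route as the paper: decompose into $(\mathbb{Z}/p\mathbb{Z})_K$-layers after extending scalars, extend each layer via Theorem~\ref{teoExtAbel} (using Theorem~\ref{teoReno} for good reduction), pass to an infinitesimal-special-fiber model via Lemma~\ref{lemmaPRINCIPALE}, combine via Theorem~\ref{teoEstTorri}, and restore regularity via Proposition~\ref{propScoppio}, iterating. Your inductive packaging --- explicitly carrying regularity of the total space and infinitesimality of $G'_s$ and checking that the latter survives the extension \eqref{sequenzaFpqc} --- is if anything tidier than the paper's sketch; the one slip is that Theorem~\ref{teoExtAbel} yields a torsor under \emph{some} order-$p$ model of $(\mathbb{Z}/p\mathbb{Z})_K$ rather than $(\mathbb{Z}/p\mathbb{Z})_R$ itself, but this is harmless (and the paper makes the same elision): over a complete $R$ with algebraically closed residue field such a model either already has infinitesimal special fiber or is $(\mathbb{Z}/p\mathbb{Z})_R$, so Lemma~\ref{lemmaPRINCIPALE} applies in any case.
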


\begin{proof} First of all we observe that we can decompose  $Y\to X_{\eta}$ into a tower of $n$ torsors $Y_1\to X_{\eta}$, $Y_i\to Y_{i-1}$ (for $i=2,.., n$, where $Y_n=Y$) each one being a quotient pointed $G_i$-torsor where $|G_i|=p$.  After eventually extending scalars (as explained in theorem \ref{teoReno}), we can assume that $G_i\simeq (\mathbb{Z}/p\mathbb{Z})_K$ (for all $i=1,..,n$) and that the Jacobian $J_{Y_i}$ has abelian reduction. Assume first that $n=2$: according to theorem \ref{teoExtAbel} there exist a finite and flat $R$-group scheme $G'_1$ of order $p$, generically isomorphic to  $G_1$, and a $G_1'$-torsor $Z_1\to X$ extending $Y_1\to X_{\eta}$. We can assume by lemma \ref{lemmaPRINCIPALE} that ${(G_1)}_s$ is infinitesimal. If $Z_1$ is regular we go on extending $Y_2\to Y_1$, otherwise we  desingularize $Z_1$ as recalled in remark \ref{remDesing}, i.e. we find a regular fibered surface $Z_1'$ and a model map $Z_1'\to Z_1$. Moreover by proposition \ref{propScoppio} there exist a regular fibered surface $X'$ and a model map $X'\to X$ such that $Z_1'\to X'$ is a $G_1'$-torsor. Now we proceed as before: there exist a finite and flat $R$-group scheme $G'_2$ of order $p$, generically isomorphic to  $G_2$, and a $G_2'$-torsor $Z_2\to Z_1'$ extending $Y_2\to Y_1$. Again we can  assume that ${(G_2)}_s$ is infinitesimal. Then by theorem \ref{teoEstTorri} there exist a finite, flat, infinitesimal $S$-group scheme $G'$ generically isomorphic to $G$ and a $G'$-torsor $Z\to X'$ extending $Y\to X_{\eta}$ and we are done setting $Y':=Z$. We only mention how to proceed when $n>2$: we start from $Z$ and, as before, we desingularize it, i.e.  we find a regular fibered surface $Z_2'$ and a model map $Z_2'\to Z$. As before there exist a regular fibered surface  $X''\to X'$ such that $Z_2'\to X''$ is a $G_1'$-torsor; then we can extend $Y_3\to Y_2$ to a torsor over $Z_2'$ and so on. We argue in the same way to prove that we can find a regular $Y'$ (if it is not we desingularize, etc.).
\end{proof}

\begin{cor}\label{corLunghezzaDue} Let $X$ be a proper and smooth  fibered surface over $R$ with geometrically connected fibers and provided with a section $x\in X(S)$. Let $G$ be a finite, étale, $K$-group scheme having a normal series of length $n=2$. Let $Y\to X_{\eta}$ be a quotient $G$-torsor, pointed in $y\in Y_x(K)$. Then, after eventually extending scalars, there exist a regular fibered surface $\widetilde{X}$, blowing up of $X$ at a closed subcheme of $X_s$, a finite flat and solvable $R$-group scheme $G'$ such that  $Y\to X_{\eta}$  can be extended to a $G'$-torsor over  $\widetilde{X}$. 
\end{cor}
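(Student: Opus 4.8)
The plan is to follow the proof of Theorem \ref{teoPRINCIPALE}, the only new feature being that the two commutative components of $Y\to X_{\eta}$ are no longer killed by $p$. Write the given length-$2$ normal series as $0\triangleleft H_1\triangleleft G$ and factor $Y\to X_{\eta}$, via the contracted product $Y\times^{G}(G/H_1)$, as a tower $Y=Y_2\to Y_1\to X_{\eta}$, where $Y_2\to Y_1$ is an $H_1$-torsor and $Y_1\to X_{\eta}$ is a $(G/H_1)$-torsor, both pointed; the homotopy exact sequence for the fundamental group scheme shows that both components are again quotient pointed torsors. After a finite extension of scalars the finite \'etale commutative $K$-group schemes $H_1$ and $G/H_1$ become constant, each is the product of its $p$-primary part and its prime-to-$p$ part, and each $p$-primary part, being a finite constant abelian $p$-group, is an iterated extension of copies of $\mathbb{Z}/p\mathbb{Z}$. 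Hence $Y\to X_{\eta}$ is refined into a tower of finitely many commutative quotient pointed torsors, each of which is either a $(\mathbb{Z}/p\mathbb{Z})_K$-torsor or a finite prime-to-$p$ \'etale torsor.

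Next, after one more finite extension of scalars, I would arrange that $X_{\eta}$ and each of the finitely many curves occurring in this tower has a Jacobian with abelian reduction: for the $(\mathbb{Z}/p\mathbb{Z})_K$-steps this is Raynaud's Th\'eor\`eme~1 in \cite{Ray3}, exactly as in Theorem \ref{teoReno}, and for the prime-to-$p$ \'etale steps it follows from the fact that a tame cover of a curve with good reduction acquires good reduction after extending scalars (Grothendieck, \cite{SGA1}, Expos\'e X). Realizing all of these simultaneously over a common finite extension and base-changing everything to the corresponding discrete valuation ring, we may assume all these Jacobians have abelian reduction; note that $X$ remains a proper and smooth fibered surface with geometrically connected fibers and a section, the residue field being unchanged.

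Now I would run the extension process along the refined tower, from the bottom up, keeping at each stage a regular fibered surface which is a model of the current base curve, has smooth geometrically connected generic fiber, a section, and a Jacobian with abelian reduction. At a $(\mathbb{Z}/p\mathbb{Z})$-step one proceeds as in Theorem \ref{teoPRINCIPALE}: extend the torsor by Theorem \ref{teoExtAbel}, replace its structure group by one with infinitesimal special fiber using Lemma \ref{lemmaPRINCIPALE}, desingularize the total space as in Remark \ref{remDesing}, and invoke Proposition \ref{propScoppio} to exhibit that desingularization as pulled back from a blow-up of the current base along a finite sequence of blow-ups at closed subschemes of special fibers. At a prime-to-$p$ \'etale step one extends the torsor (by Theorem \ref{teoExtAbel} applied with the constant \'etale $R$-model of the group, or directly by Grothendieck's specialization theorem); the resulting total space is \'etale over the current regular base, hence regular, with smooth geometrically connected generic fiber and, by the previous step, a Jacobian of abelian reduction, so no desingularization and no further blow-up is needed. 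Having reached the top of the tower, one glues it into a single torsor by iterating Theorem \ref{teoEstTorri}, i.e.\ by Corollary \ref{corEstensione2Torsori}: this yields a finite, flat, solvable $R$-group scheme $G'$ generically isomorphic to $G$ and a $G'$-torsor over the final $\widetilde X$ extending $Y\to X_{\eta}$, where $\widetilde X$ is $X$ followed by the blow-ups accumulated above.

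The main obstacle is that a prime-to-$p$ \'etale step introduces a nontrivial \'etale part into the accumulated structure group, so Proposition \ref{propScoppio} in the form stated — which requires an \emph{infinitesimal} special fiber — cannot be applied verbatim to push down the desingularizations coming from the $(\mathbb{Z}/p\mathbb{Z})$-steps that lie above a prime-to-$p$ step, and the shape $0\triangleleft H_1\triangleleft G$ of the normal series does not in general let one move every prime-to-$p$ step to the top of the tower. I would deal with this by upgrading Lemma \ref{lemmaScoppio} and Proposition \ref{propScoppio}: the singular locus of the total space of a torsor, its normalization, and therefore every center occurring in the canonical desingularization are stable under the action of the structure group, so they still descend along the torsor when the special fiber of the structure group merely has \emph{infinitesimal identity component} (equivalently, is an extension of an \'etale group by an infinitesimal one) — the point being that blowing up at such a center agrees with blowing up at its $G_s(k)$-orbit, and this orbit descends to the base as in Lemma \ref{lemmaScoppio}. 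This larger class of $R$-group schemes contains all the models produced above (extensions of constant \'etale groups by infinitesimal models of $(\mathbb{Z}/p\mathbb{Z})_R$) and is stable under the extensions manufactured by Theorem \ref{teoEstTorri}, so the descent of blow-ups goes through and $\widetilde X$ is, as claimed, obtained from $X$ by blowing up along closed subschemes of $X_s$.
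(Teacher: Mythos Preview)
Your approach works, and the upgrade of Proposition \ref{propScoppio} you sketch is essentially correct: when $G_s$ has infinitesimal identity component $G_s^0$, the singular locus of $Y$ is a $G_s(k)$-stable finite set of reduced points, its scheme-theoretic preimage from $X_s$ is a disjoint union of translates of $G_s^0$ supported on that same set, and the argument of Lemma \ref{lemmaScoppio} applies to each connected component. But the paper avoids this upgrade entirely by exploiting the length-$2$ hypothesis more sharply. It decomposes only the \emph{bottom} commutative layer $G_1=G/H_1$, and does so twice in opposite orders: first as a ${}^pG_1$-torsor $Y_1\to T$ above a prime-to-$p$ torsor $T\to X_\eta$ (the latter extends \'etale over $X$, giving a smooth base to which Theorem \ref{teoReno} applies, so $J_{Y_1}$ acquires abelian reduction); then, for the actual extension, the other way round, as a prime-to-$p$ torsor $Y_1\to P$ above a ${}^pG_1$-torsor $P\to X_\eta$. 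Theorem \ref{teoPRINCIPALE} applied to $P\to X_\eta$ produces a regular $P'$ over a blow-up $\widetilde X$ of $X$; the \'etale extension of $Y_1\to P$ over $P'$ followed by the gluing of Theorem \ref{teoEstTorri} yields a model $Z$ of $Y_1$ which, by Remark \ref{remFinale}, is \'etale over $P'$, hence already regular with no further blow-up. The top layer $H_1$ is then not decomposed at all: since $J_{Y_1}$ has abelian reduction and $Z$ is regular, Theorem \ref{teoExtAbel} extends the commutative $H_1$-torsor $Y\to Y_1$ over $Z$ in one step, and a final application of Theorem \ref{teoEstTorri} finishes. Thus every desingularization is confined to the single call to Theorem \ref{teoPRINCIPALE} at the very bottom of the tower, and Proposition \ref{propScoppio} is only ever invoked with genuinely infinitesimal special fibers. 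Your route is heavier but, as you observe, points toward longer normal series; the paper's is shorter precisely because it uses $n=2$.
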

\begin{proof} We can assume that the $K$-group scheme $G$ is constant (it is always true after eventually extending scalars). Let us decompose  $Y\to X_{\eta}$ into a tower of two commutative torsors: a $G_1$-torsor $Y_1\to X_{\eta}$ and a $G_2$-torsor $Y\to Y_1$. If $p \nmid |G_1|$ then the problem has an easy answer, otherwise let $p^n$ be the maximal $p$-power dividing $|G_1|$ and ${}^pG_1$ a (normal) $K$-subgroup of $G_1$ of order $p^n$. Then  the Jacobian $J_{Y_1}$ of $Y_1$ has potentially abelian reduction. Indeed  $Y_1$ can be decomposed into a tower of two torsors: a ${}^pG_1$-torsor $Y_1\to T$ and a  $G_1/{}^pG_1$-torsor $T\to X_{\eta}$. The latter can be extended, after eventually extending scalars, to a finite and \'etale torsor $T'\to X$ (we refer the reader to the introduction of this paper) then we apply theorem \ref{teoReno} to $Y_1\to T$. Now we forget this decomposition for $Y_1\to X_{\eta}$ and we assume that over $K$ the Jacobian $J_{Y_1}$ has abelian reduction (we have seen it is always true after eventually extending scalars). We would rather consider the following decomposition for $Y_1\to X_{\eta}$ as a tower of two torsors: a ${}^pG_1$-torsor $P\to X_{\eta}$ and a $G_1/{}^pG_1$-torsor $Y_1\to P$. Theorem \ref{teoReno} tells us that the Jacobian $J_P$ of $P$ has potentially abelian reduction; again we can assume that it has in fact abelian reduction. Hence according to theorem \ref{teoPRINCIPALE} there exist a regular fibered surface $\widetilde{X}$, a model map $\widetilde{X}\to X$, a finite flat and commutative $R$-group scheme $H_1$ such that  $P\to X_{\eta}$  can be extended to a $H_1$-torsor  $P'\to \widetilde{X}$ with $P'$ regular. Furthermore by theorem \ref{teoExtAbel} there exists a finite flat and commutative $R$-group scheme $H_2$ such that $Y_1\to P$ can be extended to a $H_2$-torsor $Y_1'\to P'$; by theorem \ref{teoEstTorri} there exist a finite and flat $S$-group scheme $H$ generically isomorphic to $G_1$ and a $H$-torsor $Z\to \widetilde{X}$ extending $Y_1\to X_{\eta}$. Since $p\nmid |H_2|$ then $H_2$ is \'etale; moreover $Z\to \widetilde{X}$ factors through $P'$, more precisely  $Z\to P'$ is  a $H_2$-torsor (remark \ref{remFinale}) so $Z\to P'$ is smooth, then $Z$ is regular as $P'$ is (see for instance \cite{BLR}, \S 2.3 Proposition 9). Finally we can apply again theorem \ref{teoExtAbel} to $Y\to Y_1$ and \ref{teoEstTorri} in order to conclude.
\end{proof}

\begin{rem}It is obvious that the tools we have presented allows us to extend solvable torsors even if they do not have a normal series of length $2$ but only in some particular cases, for example if every commutative component $Y_i$ of the  torsor $Y\to X_{\eta}$ has a Jacobian $J_{Y_i}$ that has potentially abelian reduction. As clear from the proof of corollary \ref{corLunghezzaDue} this condition is satisfied, for instance, when all the $G_i$ but $G_1$ have order not divisible by $p$.
\end{rem}

\medskip

\scriptsize

\begin{flushright} Marco Antei\\ 
E-mail: \texttt{antei@math.univ-lille1.fr}\\ \texttt{marco.antei@gmail.com}\\
\end{flushright}

\end{document}